\title{Log-Averaged Mirror Prox for Fast, Large-Scale Optimal Transport in Linear Space}
\newcommand{\R}{\mathbb{R}} 
\renewcommand{\O}{\mathcal{O}}
\newcommand{\tO}{\widetilde{\O}}
\newcommand{\DHa}{\D_{H_c^\alpha}}
\newcommand{\softmin}{\operatorname{smin}}
\DeclareMathOperator*{\argmax}{argmax}
\DeclareMathOperator*{\argmin}{argmin}
\DeclareMathOperator*{\Argmin}{Argmin}
\newcommand{\inner}[2]{\left\langle #1,#2\right\rangle}
\newcommand\footnoteref[1]{\protected@xdef\@thefnmark{\ref{#1}}\@footnotemark}
\newcommand{\Rp}{\R_{+}}
\newcommand{\coup}{\Pi}
\newcommand{\D}{\operatorname{D}}
\newcommand{\LSE}{\mathrm{LSE}}
\newcommand{\diag}[1]{\mathcal{D}_{#1}}
\theoremstyle{plain}
\newtheorem{theorem}{Theorem}[section]
\newtheorem{proposition}[theorem]{Proposition}
\newtheorem{lemma}[theorem]{Lemma}
\theoremstyle{definition}
\theoremstyle{remark}
\newcommand{\KL}{\mathrm{D}_\mathrm{KL}}
\newcommand{\one}{\mathbf{1}}
\newcommand{\zero}{\mathbf{0}}
\newcommand{\col}{\bm{\mathrm{c}}}
\newcommand{\row}{\bm{\mathrm{r}}}
\author{
  Matthew X. Burns\thanks{
        Department of Electrical and Computer Engineering, University of Rochester, Rochester, NY 14627 (email: {\tt mburns13@ur.rochester.edu}).}
  \and
  Jiaming Liang\thanks{
        Goergen Institute for Data Science and Artificial Intelligence (GIDS-AI) and Department of Computer Science, University of Rochester, Rochester, NY 14620 (email: {\tt jiaming.liang@rochester.edu}). This work was partially supported by GIDS-AI seed funding and AFOSR grant FA9550-25-1-0182.}
}
\date{May 11, 2026}
\begin{document}

\maketitle

\begin{abstract}
 We propose Log-Averaged Mirror Prox (LAMP), a linear-space primal-dual method for large-scale optimal transport.
LAMP implements primal mirror prox updates by tracking an averaged dual sequence, reducing storage complexity from $\O(nm)$ to $\O(n+m)$ while preserving dense, GPU-friendly reductions.
Consequently, LAMP preserves the last-iterate $\tO( nm\varepsilon^{-1})$ arithmetic complexity of conservatively parameterized primal-dual mirror prox. We further analyze LAMP as a direct optimal transport solver in a more performant parameter regime, providing a last-iterate sub-optimality certificate dependent on infeasibility and an explicit $\mathcal O(1/t)$ term.  Moreover, we give a computable sufficient condition for best-iterate convergence to a saddle-point.
Numerical experiments with an optimized CUDA implementation show that LAMP outperforms first-order baselines in several high-accuracy (entropic) optimal transport problems. LAMP is further shown to scale up to problems with $n=m=2^{18}$ marginal supports, which were previously beyond the reach of primal-dual first-order methods.\\

\noindent{\bf Key words.} optimal transport, entropic optimal transport, mirror prox, primal-dual methods, GPU acceleration
		\\
		{\bf AMS subject classifications.} 
		49Q22, 49M37, 65K05, 68Q25, 90C25
\end{abstract}

\section{Introduction}\label{sec:intro}
Given probability mass functions $r\in\R^n$, $c\in\R^m$ and a cost matrix $C\in\R^{n\times m}$, the (discrete) optimal transport (OT) problem is the linear program
\begin{equation}\label{prob:primal_ot}
\min_{X\in\coup(r,c)}\inner{C}{X},
\end{equation}
where $\coup(r,c)$ is the set of couplings between $r$ and $c$. Variations of OT have found applications in generative modeling~\cite{arjovskyWassersteinGAN2017}, computational science~\cite{levyFastSemidiscreteOptimal2021,huizingOptimalTransportImproves2022}, robotics~\cite{yaoVisualTrackingUsing2018}, domain adaptation~\cite{courtyOptimalTransportDomain2017}, economics~\cite{galichonOptimalTransportMethods2016};  underscoring the need for computationally efficient, large-scale OT. Accordingly, OT has become an increasingly popular area of research at the intersection of first-order optimization and high-performance parallel computing. Following the seminal work of~\cite{cuturiSinkhornDistancesLightspeed2013}, numerous (entropic) OT-focused algorithms have been proposed that target highly parallel, GPU-amenable subroutines. The standard Sinkhorn algorithm~\cite{sinkhornConcerningNonnegativeMatrices1967,cuturiSinkhornDistancesLightspeed2013} computes a solution with $\varepsilon$-additive error in $\tO(nm\varepsilon^{-2})$ operations~\cite{dvurechenskyComputationalOptimalTransport2018}. Primal-dual methods have been proposed to improve the dependence on $\varepsilon$ to $\tO(nm(n+m)^{1/2}\varepsilon^{-1})$~\cite{dvurechenskyComputationalOptimalTransport2018,guminovCombinationAlternatingMinimization2021,linEfficiencyEntropicRegularized2022} or  $\tO(nm\varepsilon^{-1})$~\cite{jambulapatiDirectTildelbraceOrbrace2019a,maiFastAccurateSplitting2021a,luoImprovedRateFirst2023} (see Appendix~\ref{app:review} and Table~\ref{tab:summary} for further literature review). However, existing primal-dual methods require primal averaging, which either necessitates $\mathcal{O}(nm)$ storage or GPU-unfriendly recovery subroutines dependent on sparse access patterns and pointer-based data structures~\cite{assadiSemiStreamingBipartiteMatching2022}. In a recent development, entropy-regularized Primal-Dual Mirror Prox (PDMP) methods by~\cite{cen2024fast,liFastComputationOptimal2025} achieve $\mathcal{O}(nm\varepsilon^{-1})$ last-iterate complexity for computing an $\varepsilon$-additive solution: requiring no postprocessing or ergodic averaging. However, as formulated, these methods still require $\mathcal{O}(nm)$ storage, which is prohibitive for large-scale OT applications. In many cases, entries of the cost matrix $C$ can be computed on-the-fly using a distance kernel with $\mathcal{O}(n+m)$ storage, making explicit representation of $X$ the dominant storage cost. In this work, we provide the following contributions:
\begin{enumerate}
    \item We develop Log-Averaged Mirror Prox (LAMP), a dual-only variant of PDMP that works entirely in the dual space. LAMP preserves the theoretical guarantees of PDMP while reducing storage complexity from $\O(nm)$ to  $\mathcal{O}(n+m)$, making the method scalable to large-scale OT instances. Unlike the semi-streaming dual-extrapolation approach of~\cite{assadiSemiStreamingBipartiteMatching2022}, LAMP avoids sparse, pointer-based recovery and maps directly to dense GPU reductions.
    \item We further analyze LAMP as a direct OT method in the practical parameter regime, where the entropic regularization parameter is set to zero.
    In particular, we prove a last-iterate objective-error bound controlled by marginal infeasibility and an explicit $\mathcal{O}(1/t)$ term, as well as a computable sufficient condition for best-iterate convergence to a saddle-point. 
    \item Supported by numerical experiments, we demonstrate that LAMP outperforms comparable GPU-accelerated first-order solvers in several benchmark and real-world OT problems. Our LAMP implementation is publicly available and shown to scale to problems with $n=m=2^{18}$ marginal supports, and our CUDA-accelerated codebase is shown to outperform the baseline PyKeOps library in dense reduction operations.
\end{enumerate}

\section{Preliminaries}
Throughout, $\R$ denotes the real numbers and $\Rp$ are the non-negative reals. By $\Delta^n$, we mean the $n$-dimensional unit simplex. Similarly, $\{\Delta^m\}^n$ denotes the set of $n\times m$ row-stochastic matrices (non-negative matrices whose rows sum to one). For a vector $x\in\R^n$, $\diag{x}\in\R^{n\times n}$ is the matrix with $x$ along the diagonal. By $x\in[-1,1]^m$, we mean for all $1\leq i\leq m$, $-1\leq x_{i}\leq 1$.

Given a closed, proper, convex and differentiable function $\omega:\R^n\to(-\infty,\infty]$, we define its Bregman divergence as
\begin{equation*}
    \D_\omega(x\|y)=\omega(x)-\omega(y)-\inner{\nabla \omega(y)}{x-y}.
\end{equation*}

Unless otherwise indicated, $\|\cdot\|$ is the standard Euclidean vector norm, and $\|\cdot\|_1$ and $\|\cdot\|_\infty$ are the $\ell_1$ and $\ell_\infty$ norms. A norm applied to a matrix is meant in an entry-wise manner, e.g., $\|C\|_\infty=\max_{ij}|C_{ij}|$. We denote the Euclidean inner product by $\inner{x}{y}=x^\top y$. When applied to matrices or vectors, functions $\log$, $\exp$, and $\tanh$ are to be interpreted in an entry-wise fashion. We define the entropy of a distribution $\gamma\in\Delta^n$ as $H(\gamma)=-\inner{\gamma}{\log \gamma}$ with the convention that $0\log 0=0$ and the LogSumExp function $\LSE(x)=\log\left[\sum_{j=1}^m\exp(x_j)\right]$ where $x\in\R^m$. For convenience, we denote the Bregman divergence of the negative entropy as the KL-divergence $\KL(X\|Y)=\D_{-H}(X\|Y)$. For a matrix $A\in \R^{n\times m}$, $\row(A)\in\R^n$ is the row-wise sum and $\col(A)\in\R^m$ is the column-wise sum. We denote $\one_m$ as the all-ones vector in $\R^m$ and $\zero_m$ as the all-zeros vector in $\R^m$.

\subsection{Optimal Transport}
For $\varepsilon>0$, we say that $X\in\coup(r,c)$ is an $\varepsilon$-solution to the primal OT problem~\eqref{prob:primal_ot} if
\[\inner{C}{X}-\min_{X\in\coup(r,c)}\inner{C}{X}\leq \varepsilon.\]
As formulated, the OT problem is an $n\cdot m$ dimensional linear program with $m+n$ constraints. Interior point methods exploiting the low-dimensional constraints can solve~\eqref{prob:primal_ot} to high-precision in $\tO((n+m)^{5/2})$ arithmetic operations~\cite{leePathFindingMethods2014}, however the underlying subroutines are not amenable to parallelization, and therefore cannot leverage the explosion in concurrent computing driven by widespread GPU adoption.

The computational challenges of OT have led to the wide adoption of entropic optimal transport (EOT)~\cite{cuturiSinkhornDistancesLightspeed2013,nutzIntroductionEntropicOptimal}. EOT augments the OT objective function with a negative entropy term, making the objective $\eta $-strongly convex with respect to the $\ell_1$ and $\ell_2$ norms,
\begin{equation}
    \min_{X\in\coup(r,c)}\{\inner{C}{X}-\eta  H(X)\}\label{prob:primal_eot}.
\end{equation}

The predominant method for solving~\eqref{prob:primal_eot} is the Sinkhorn-Knopp matrix-scaling algorithm (or just ``Sinkhorn'' for brevity). Sinkhorn is simple to analyze, performs well for $\eta \geq 10^{-3}$, and is highly parallelizable. Ever since the popularization of Sinkhorn in~\cite{cuturiSinkhornDistancesLightspeed2013}, numerous works have analyzed~\cite{altschulerNearlinearTimeApproximation2017,dvurechenskyComputationalOptimalTransport2018,ghosalConvergenceRateSinkhorns2025c} or extended~\cite{benamouIterativeBregmanProjections2015,linEfficiencyEntropicRegularized2022} the basic approach. As proven in~\cite{dvurechenskyComputationalOptimalTransport2018}, Sinkhorn has a computational complexity of $\tO(nm\varepsilon^{-2})$ for finding an $\varepsilon$-solution to~\eqref{prob:primal_ot}. This is unacceptably slow for high-accuracy (small $\varepsilon$) OT, motivating further approaches for EOT based on accelerated gradient descent~\cite{dvurechenskyComputationalOptimalTransport2018}, mirror descent~\cite{linEfficiencyEntropicRegularized2022}, and saddle-point methods~\cite{jambulapatiDirectTildelbraceOrbrace2019a,chambolleAcceleratedBregmanPrimalDual2022}. 



\subsection{Penalty and Saddle-Point Formulations}\label{subsec:saddle}
An alternative penalty formulation of OT/EOT investigated by~\cite{jambulapatiDirectTildelbraceOrbrace2019a} and~\cite{liFastComputationOptimal2025} is the problem
\begin{equation}
    \min_{p\in\{\Delta^m\}^n}\{P^{\eta }(\diag{r}p)
    :=\inner{C}{\diag{r}p}+2\|C\|_\infty\|\col_r(p)-c\|_1-\eta  H_r(p)\},\label{prob:l1_ot_primal}
\end{equation}
where $\col_r(p):=\col(\diag{r}p)$, $H_r(p):=H(\diag{r}p)$, and $\eta \geq 0$ is the entropic regularization coefficient. 
There are two primary differences between~\eqref{prob:primal_ot}/\eqref{prob:primal_eot} and~\eqref{prob:l1_ot_primal}. First, we reparameterize the primal variables $X=\diag{r}p$ from the matrix simplex $X\in \Delta^{n\times m}$ to the row-stochastic matrix $p\in\{\Delta^{m}\}^n$. One can easily show that $\diag{r}p\in \Delta^{n\times m}$ with row marginal $\row(\diag{r}p)=r$, therefore the row parameterization directly enforces the row marginal constraint. The reparameterization is not unique to this formulation, and has been used in other (E)OT algorithms~\cite{chambolleAcceleratedBregmanPrimalDual2022}. Second, we replace the constraint $\col_r(p)=c$ with an $\ell_1$ penalty term, making the problem unconstrained but nonsmooth. 

From~\cite[Lemma 2.3]{jambulapatiDirectTildelbraceOrbrace2019a}, an optimizer to problem~\eqref{prob:l1_ot_primal} with $\eta =0$ is reducible to an optimizer to~\eqref{prob:primal_ot} in $\mathcal{O}(nm)$ operations. For completeness, we extend the equivalence result to the case where $\eta >0$ in Appendix~\ref{appdx:deferred} (see Lemma~\ref{lem:l1_prob_equiv}), however our primary target is the unregularized problem~\eqref{prob:primal_ot}.

To practically solve~\eqref{prob:l1_ot_primal}, we dualize the $\ell_1$ penalty with the identity $\|x\|_1=\max_{y\in[-1,1]^m}\inner{y}{x}$ for $x\in\R^m$, obtaining the primal-dual formulation
\begin{align}\label{prob:spp}
    \min_{p\in\{\Delta^m\}^n}&\max_{\theta\in[-1,1]^m}\{K^{\eta }(\diag{r}p,\theta):=\inner{C}{\diag{r}p}  + 2\|C\|_\infty\inner{\theta}{\col_r(p) -c}- \eta  H_r(p)\}.
\end{align}
 Since~\eqref{prob:spp} is convex-concave over compact domains, Sion's minimax theorem~\cite{sionGeneralMinimaxTheorems1958} permits us to interchange the $\min$ and $\max$, and solve~\eqref{prob:spp} as a saddle-point problem.


\section{Log-Averaged Mirror Prox}\label{sec:lamp}
A popular method for solving saddle-point problems over simple, compact sets is mirror prox~\cite{nemirovskiProxMethodRateConvergence2006}, which extends classical extragradient methods to non-Euclidean domains. In this section, we show that recently proposed PDMP~\cite{cen2024fast,liFastComputationOptimal2025} with last-iterate guarantees can be implemented in $\mathcal{O}(n+m)$ storage by connecting primal mirror descent to dual log-averaging.

\subsection{Mirror Descent as Dual Log-Averaging}\label{sec:mirror_maps}

First, we define the Bregman divergence $\D_{H_c^\alpha}(\theta^{a}\|\theta^{b})$ where $H_c^\alpha(\cdot)$ is the negative dual entropy
\begin{align*}
    H_c^\alpha(\nu)=\sum_{j=1}^mc^\alpha_j\biggl(\frac{1+\nu_{j}}{2}\ln\left[\frac{1+\nu_j}{2}\right]+\frac{1-\nu_{j}}{2}\ln\left[\frac{1-\nu_j}{2}\right]\biggr),
\end{align*}
where $c^\alpha:=c+(\alpha/m)\one_m$.
The $\DHa$ and $\KL$ divergences will serve as the movement limiting potentials for the primal and dual steps, respectively, and therefore play a key role in our mirror prox definitions.

Let $F_{\theta}^{\eta }(\cdot)=K^{\eta }(\diag{r}\cdot,\theta)$ be the primal function defined by the saddle objective in \eqref{prob:spp} with the dual variables fixed at $\theta$, and similarly define $G_{p}^{\eta }(\cdot)=K^{\eta }(\diag{r}p,\cdot)$. We then define the primal and dual mirror maps as
\begin{equation}\label{def:mirror_maps}
\begin{split}
    \mathcal{M}_{\tau}^{\eta }(p^0;\theta)&=\argmin_{p\in\{\Delta^m\}^n}\{\tau\inner{\nabla F_\theta^{\eta }(p^0)}{p}+\KL(\diag{r}p\|\diag{r}p^0)\}, \\
   \mathcal{M}_{\tau}^{\eta }(\theta^0;p)&=\argmax_{\theta\in\R^m}\{\tau\inner{\nabla G_{p}^{\eta }(\theta^0)}{\theta}-\D_{H_c^\alpha}(\theta\|\theta^0)\},
   \end{split}
\end{equation}
which we can show have closed-form solutions (see Appendix~\ref{appdx:deferred})
\begin{align}
    \nonumber\mathcal{M}_{\tau}^{\eta }(p^0;\theta)_{ij}=&\frac{(p^0)^{1-\tau\eta }_{ij}}{Z_i}\exp[-\tau(C_{ij}+2\|C\|_{\infty}\theta_j)], \nonumber \\
    \mathcal{M}_{\tau}^{\eta }(\theta^0;p)_j=&\tanh\biggl[\frac{2\tau\|C\|_\infty}{c^\alpha_j}(\col_r(p)_j-c_j)+\frac{1}{2}\log\frac{1+\theta^0_j}{1-\theta^0_j}\biggr]. 
    \label{def:closed_form_opt}
    \end{align}
    Here $Z_i$ is the normalization constant to ensure $\mathcal{M}_\tau^{\eta }(p^0;\theta)\in\{\Delta^{m}\}^n$.
    Additionally, we define the $\tanh$ clipping function $\operatorname{tclip}(\theta, \beta)$, which performs coordinate-wise clipping to the subset $[-\tanh(\beta/2), \tanh(\beta/2)]^m\subseteq[-1,1]^m$ for some $\beta > 0$. As shown in~\cite{liFastComputationOptimal2025}, dual clipping is theoretically useful and substantially improves empirical performance. 
    
    Finally, for $\theta\in[-1,1]^m$ and $\eta>0$, we define the dual-to-primal map $p^\eta(\theta)$ as
\begin{equation}\label{def:dual_to_primal}
    p^\eta(\theta)=\argmin_{p\in\{\Delta^m\}^n}\{\inner{C + 2\|C\|_\infty\theta}{\diag{r}p}-\eta H_r(p)\}= \diag{Z}^{-1} \exp[-\eta^{-1}(C+2\|C\|_\infty\one_n\theta^\top)],
\end{equation}
where $\diag{Z}^{-1}$ enforces row normalization.

With the main ingredients in place, we prove a simple connection between the primal mirror map and the dual variable $\theta$. The following lemma is the key observation that enables us to reduce PDMP from $\mathcal{O}(nm)$ space to $\mathcal{O}(n+m)$ space in the following subsection by replacing the primal iterates with a weighted average in the dual. The proof is deferred to Appendix~\ref{appdx:deferred}.
\begin{lemma}\label{lem:primal_md_eq_dual_avg}
    Let $\eta\geq 0$, $\gamma>0$, and $\tau>0$ satisfy $\tau\eta\leq 1$. Then let $\theta^a$, $\theta^b\in [-1,1]^m$. Define $\eta'=\gamma / (1+\tau(\gamma-\eta ))$ and $\theta'=\theta^a + \tau\eta'(\theta^b-\theta^a)$. Then, we have the equivalence
    \begin{equation}\label{eq:mirror_map_id}
        p^{\eta'}(\theta')=\mathcal{M}^\eta_{\tau}(p^\gamma(\theta^a);\theta^b).
    \end{equation}
\end{lemma}


\subsection{From PDMP to LAMP}\label{ssec:algorithm}
Using the $\mathcal{M}^{\eta }_\tau(\cdot;p)$ and $\mathcal{M}^{\eta }_\tau(\cdot;\theta)$ primitives, ~\cite{liFastComputationOptimal2025} proposed a PDMP method with last-iterate guarantees. 
Algorithm~\ref{alg:pdmp} summarizes PDMP, where the Round function~\cite{altschulerNearlinearTimeApproximation2017} is a standard OT subroutine, which returns a feasible transport plan and is given in Appendix~\ref{appdx:deferred}. 
Since PDMP is composed entirely of mirror map operations, Lemma~\ref{lem:primal_md_eq_dual_avg} implies that we can recover the primal sequences $\{p^t\}$ and $\{\bar p^t\}$ using auxiliary dual sequences $\{\nu^t\}$ and $\{\bar \nu^t\}$. The resulting LAMP method is given in Algorithm~\ref{alg:lamp}.
\begin{minipage}[t]{0.49\linewidth}
\begin{algorithm}[H]\caption{Primal-Dual Mirror Prox}\label{alg:pdmp}
    \begin{algorithmic}
        \REQUIRE $C\in\R_{+}^{n\times m}$, $r\in\Delta^n$, $c\in\Delta^m$, $\alpha,\,\beta>0$, $\tau_1,\,\tau_2>0$, $\eta \geq 0$, $T\in\mathbb{N}>0$, set $p^0=(1/m)^{n\times m}$, $\theta^0=\zero_m$, $c^\alpha=c + \alpha m^{-1}\one_m$.
        \FOR{$t=0$ to $T-1$}
            \STATE \textbf{Step 1)} Compute
            \begin{align}
                \bar{p}^{t+1}=&\mathcal{M}_{\tau_1}^{\eta }(p^t;\theta^t)\label{step:pdmp_midpoint}\\ \nonumber\bar{\theta}^{t+1}=&\mathcal{M}_{\tau_2}^{\eta }(\theta^t;p^t)\\
                p^{t+1}=&\mathcal{M}_{\tau_1}^{\eta }(p^t;\bar\theta^{t+1}) \label{step:pdmp_main}\\
                \nonumber\hat\theta^{t+1}=&\mathcal{M}_{\tau_2}^{\eta }(\theta^{t};\bar p^{t+1})\\
                \theta^{t+1}=&\operatorname{tclip}(\hat\theta^{t+1},\beta)\label{eq:pdmp_theta_clipping}
            \end{align}
        
            \vspace{-0.5em}
        \ENDFOR
        \STATE \textbf{Step 2)} \textbf{return} $\operatorname{Round}(\diag{r}p^{T}, r, c)$.
    \end{algorithmic}
\end{algorithm}
\end{minipage}
\hfill
\begin{minipage}[t]{0.49\linewidth}
\begin{algorithm}[H]\caption{Log-Averaged Mirror Prox}\label{alg:lamp}
    \begin{algorithmic}
        \REQUIRE $C\in \Rp^{n\times m}$, $r\in\Delta^n$, $c\in\Delta^m$, $\alpha,\,\beta>0,\,\tau_1,\,\tau_2 > 0$,  $\eta \geq 0$,  $T\in\mathbb{N}$, set $\theta^0=\nu^0=\zero_m$, $c^\alpha=c + \alpha m^{-1}\one_m$, $\eta_0=\infty$.
        \FOR{$t= 0$ to $T-1$}
            \STATE \textbf{Step 1)} Compute           
             \begin{align}
                \eta_{t+1}=&\eta_t/(1+\tau_1(\eta_t-\eta ))\label{step:eta_update}\\
                \bar{\nu}^{t+1}=&\nu^t+\tau_1\eta_{t+1}(\theta^t-\nu^t)\label{step:lamp_mid_nu}\\
                \bar\theta^{t+1}=&\mathcal{M}_{\tau_2}^{\eta }(\theta^{t};p^{\eta_t}(\nu^{t}))\label{step:lamp_mid_theta}\\
                {\nu}^{t+1}=&\nu^t+\tau_1\eta_{t+1}(\bar{\theta}^{t+1}-\nu^t)\label{step:lamp_main_nu}\\
            \hat\theta^{t+1}=&{\mathcal{M}}_{\tau_2}^{\eta }(\theta^{t};p^{\eta_{t+1}}(\bar\nu^{t+1}))\label{step:lamp_main_theta}\\
            \nonumber\theta^{t+1}=&\operatorname{tclip}(\hat\theta^{t+1},\beta)
            \end{align}
            

        \ENDFOR
        \STATE \textbf{Step 2)} \textbf{return} $\operatorname{Round}(\diag{r}p^{\eta_{T}}(\nu^{T}), r, c)$.
    \end{algorithmic}
\end{algorithm}
\end{minipage}



The following proposition formalizes the equivalence between PDMP and LAMP. See Appendix~\ref{appdx:deferred} for the proof.
\begin{proposition}\label{prop:equivalence}
    Consider the sequences $\{p^t\}$ and $\{\bar p^{t}\}$ from Algorithm~\ref{alg:pdmp} and $\{\nu^t\}$ and $\{\bar{\nu}^{t}\}$ from Algorithm~\ref{alg:lamp}. Assume that $\tau_1\eta\leq 1$ and set $\eta_0=\infty$. Then, we have the equivalence
    \begin{equation*}
        p^t = p^{\eta_t}(\nu^t),\quad \bar p^{t+1} =  p^{\eta_{t+1}}(\bar\nu^{t+1}).
    \end{equation*}
\end{proposition}

Therefore, LAMP is a dual implementation of PDMP, tracking the weighted dual averages $\{\nu^t\}$ and $\{\bar\nu^t\}$ and the scalar sequence $\{\eta_t\}$ to implicitly store primal information. By the closed-form solution in~\eqref{def:closed_form_opt}, we only need the primal marginal $\col_r(p^\eta(\nu))$ to compute $\mathcal{M}^{\eta }_\tau(\theta;p^\eta(\nu))$ in~\eqref{step:lamp_mid_theta} and~\eqref{step:lamp_main_theta}, which can be computed using $\mathcal{O}(n+m)$ space as discussed in Subsection~\ref{ssec:log_domain}. Note that Round only involves low-rank corrections, therefore the last step of Algorithm~\ref{alg:lamp} can be performed implicitly with $\mathcal{O}(n+m)$ memory and $\mathcal{O}(nm)$ operations. \textbf{Remark:} Our contribution extends beyond PDMP, as any other mirror prox variant with non-ergodic convergence guarantees can be similarly implemented by tracking the low-dimensional dual variables.

The $\{\eta_t\}$ sequence in~\eqref{step:eta_update} is particularly noteworthy. Observe that $\eta_{t+1}^{-1}$ satisfies the recursion $\eta_{t+1}^{-1}=\tau_1+\eta_{t}^{-1}(1-\tau_1\eta )$. Note that $\eta_1 =1/\tau_1$ by taking the limit $\eta_0\to\infty$. Solving the recursion gives the generic form
$
    \eta_{t+1}^{-1}={\eta }^{-1}(1-(1-\tau_1\eta )^{t+1}),
$
which becomes $\eta_{t}^{-1}=\tau_1t$ in the limit $\eta \to0$. Therefore, PDMP and LAMP are actually \emph{temperature annealing} methods. Annealing has been highly effective in prior EOT methods~\cite{schmitzerStabilizedSparseScaling2019a,kemertasEfficientAccurateOptimal2025}, and provides a novel perspective on mirror prox methods for OT/EOT.

PDMP has been proposed and analyzed in recent works for entropy-regularized games~\cite{cen2024fast} and OT~\cite{liFastComputationOptimal2025}. In particular,~\cite{liFastComputationOptimal2025} proved that, with certain parameters, Algorithm~\ref{alg:pdmp} achieves $\tO(nm\varepsilon^{-1})$ complexity. Since LAMP is equivalent to PDMP by Proposition \ref{prop:equivalence}, the complexity result recalled below applies to LAMP as well.

\begin{theorem}[{\cite[Theorem 2.2]{liFastComputationOptimal2025}, Informal}]\label{thm:li_informal}
Given $\varepsilon>0$, under the
parameter choices of~\cite{liFastComputationOptimal2025} with $\eta=\mathcal{O}(\varepsilon/(\|C\|_\infty\log m))$, PDMP/LAMP computes
an $\varepsilon$-solution to~\eqref{prob:primal_ot} in
$
\tO(nm\|C\|_\infty\varepsilon^{-1})
$
arithmetic operations.
\end{theorem}

The full statement along with specific parameter choices can be found in Appendix~\ref{appdx:sketch} (see Theorem~\ref{thm:main_convergence_li}). We note that the guarantees of Theorem~\ref{thm:main_convergence_li} hold \textit{only} for a conservative set of parameters. Crucially, the guarantees require $\eta>0$, therefore following the traditional Sinkhorn-type model of solving weakly regularized EOT to solve OT. As observed in~\cite{liFastComputationOptimal2025} and our testing, a more performant and empirically stable set of parameters is $\eta =0$, $\tau_1=\tau_2=1/(2\|C\|_\infty)$, and $\alpha=0.01$. With $\eta=0$, we obtain a \textit{direct} OT method: targeting~\eqref{prob:primal_ot} without the intermediate EOT step. 

The following proposition provides computable optimality guarantees for the sequences generated by Algorithm~\ref{alg:lamp} as well as a sufficient condition for finding approximate saddle-points of~\eqref{prob:spp}. Unlike Theorem~\ref{thm:li_informal}, Proposition~\ref{prop:infeas_bound} provides guarantees for PDMP/LAMP in the fully unregularized regime.
\begin{proposition}\label{prop:infeas_bound}
    Suppose $\tau_1=\tau_2=1/(2\|C\|_\infty)$, $\eta =0$, and $\alpha>0$. Let $(X^*,\theta^*)\in\coup(r,c)\times[-1/2,1/2]^m$ be a saddle-point of $K^0(\cdot,\cdot)$ in \eqref{prob:spp} where $X^*$ is also a minimizer of~\eqref{prob:primal_ot}. For each iteration of Algorithm~\ref{alg:lamp}, define $X^t:=\diag{r}p^{\eta_t}(\nu^t)$. Then, for all iterations $t\geq 1$ of Algorithm~\ref{alg:lamp} (or, equivalently, Algorithm~\ref{alg:pdmp}), the following statements hold
    \begin{itemize}
        \item[a)] 
        \begin{equation}\label{ineq:main_infeas_bounds}
        \inner{C}{\widetilde X^t-X^*}\leq 4\|C\|_\infty\|\col(X^t)-c\|_1 +\frac{2\|C\|_\infty\log m}{t},
    \end{equation}
    where $\widetilde X^t=\text{Round}(X^t, r, c)$;
    \item[b)] defining
    \begin{equation}\label{def:sum_condition}
\hspace{-1.5cm}    D_t:=\sum_{s=0}^{t-1}\DHa(\bar\theta^{s+1}\|\hat\theta^{s+1})- \DHa(\bar\theta^{s+1}\|\theta^{s})+\KL(\bar X^{s+1}\|X^{s+1})- \KL(\bar X^{s+1}\|X^{s}),\end{equation}
    where $\bar X^t:=\diag{r}p^t(\bar\nu^t)$, then
    \begin{equation}\label{ineq:spp_midpoint_conv}
        \min_{1\leq s\leq t}\{K^0(\bar{X}^{s},\theta^*)-K^0(X^*,\bar\theta^s)\}\leq\frac{2\|C\|_\infty[\log m+(1+\alpha)\log 2+D_t]}{t}.
    \end{equation}
    
    \end{itemize}
\end{proposition}

 Round (see Algorithm~\ref{alg:round}) from~\cite{altschulerNearlinearTimeApproximation2017} returns a feasible point $\widetilde{X}^t\in\coup(r,c)$ in $\mathcal{O}(nm)$ operations satisfying $\|\widetilde{X}^t-X^t\|_1\leq 2\|\col(X^t)-c\|_1$ (see Lemma \ref{lem:jason_rounding}). The right-hand side of~\eqref{ineq:main_infeas_bounds} decomposes the rounded objective error into two terms: the column infeasibility and an $\mathcal{O}(1/t)$ initialization bias. Therefore, the column infeasibility and iteration count provide a \emph{computable, last-iterate optimality certificate for LAMP}. As shown in our numerical results, LAMP finds feasible points quite rapidly. The remaining open theoretical question is to prove infeasibility convergence, which would provide a full convergence analysis.

The second part of Proposition~\ref{prop:infeas_bound} gives a complementary midpoint guarantee. If the partial sums $D_t$ are $o(t)$, then the saddle-point convergence follows. Indeed, additional numerical experiments in Appendix~\ref{appdx:analysis} show that~\eqref{def:sum_condition} is benign across a variety of OT problems, with $D_t$ rapidly becoming nonpositive and appearing to converge to zero. A uniform bound $D_t=\mathcal{O}(\log mn)$ would yield an $\mathcal{O}(\varepsilon^{-1}\|C\|_\infty\log mn)$
iteration complexity, while a bound $D_t=\mathcal{O}(\log(nmt))$ would imply
an $\mathcal{O}(\varepsilon^{-1} \|C\|_\infty \log(nm\varepsilon^{-1}))$ iteration complexity for
finding an $\varepsilon$-approximate saddle-point of~\eqref{prob:spp}. 

\section{Implementation Details}\label{sec:implementation}

\subsection{Log-Domain Operation}\label{ssec:log_domain}
 We compute the column marginal $\col_r(p^{\eta_t}(\nu))$ using a two-step process. First, we compute the row-wise log-normalization constants $\log Z_i$ using the LogSumExp trick: $\LSE(x)=\LSE(x-\max_ix_i)+\max_ix_i$. Denoting $m_i=\max_j\{-\eta_t^{-1}(C_{ij}+2\|C\|_\infty\nu_{j})\}$, we have
\begin{align*}
    \log Z_i &= \LSE(-\eta_t^{-1}(C_{i:}+2\|C\|_\infty\nu)-m_i) + m_i,\\
    \col_r(p^{\eta_t}(\nu))_j&=\sum_{i=1}^n r_i\exp\left[-\frac1{\eta_t}(C_{ij}+2\|C\|_\infty\nu_{j})-\log Z_i\right].
\end{align*}
Each term $-\eta_t^{-1}(C_{ij}+2\|C\|_\infty\nu_{j})$ is computed on-the-fly and accumulated into either an $\mathcal{O}(n)$ buffer (for $\log Z_i$ terms) or an $\mathcal{O}(m)$ buffer (for $\col_r(p^{\eta_t}(\nu))_j$ terms), leading to the claimed $\mathcal{O}(n+m)$ space complexity of LAMP.

\subsection{Optimized Reductions}\label{subsec:reduction}
Log-domain computations are commonly used to stabilize EOT algorithms~\cite{peyre2019computational,schmitzerStabilizedSparseScaling2019a}, and are common in widely-distributed OT/EOT packages~\cite{flamaryPOTPythonOptimal2021b}. Our primary challenge was to perform the operations in a computationally efficient manner. Log-domain computation requires three reduction operations: row-wise $\max$, row-wise $\LSE$, and finally a column-wise sum.

To reduce reduction overhead, we utilize custom kernels leveraging warp-tiling and kernel/loop fusion. Warp-tiling uses a single warp to perform each entry of the reduced vector, with threads within a warp coalescing global memory accesses and using warp-level reductions. Fusion performs the $\max$ and the LSE reductions in the same loop, reducing the number of memory accesses at the cost of additional $\exp$ and branching operations. Further details can be found in Appendix~\ref{appdx:numerical}.

After applying warp-tiling (``WT'') and loop fusion (``Fused''), our measured reduction kernel latency outperforms the optimized PyKeOps library~\cite{charlier2021kernel}, as shown in Table~\ref{tab:kernel_comparison}. At $n=1024$, the naive kernel does not achieve full GPU occupancy (32768 threads), hence warp-tiling improves performance by 32$\times$. However, as the problem size increases, the naive kernel comes closer to achieving full SM occupancy, ultimately removing concurrency benefits from warp-tiling. Even when the GPU is at full occupancy, warp-tiling results in improved memory access patterns and a $1.4\times$ improvement over the naive kernel. The ``fused'' kernel further improves memory access overhead as $n$ increases, gaining an additional $1.25\times$ speedup. 
\begin{table}[t]
    \caption{Ablation study of kernel optimizations with comparison to the LogSumExp reduction from the PyKeOps library on an RTX 4090 GPU. Across problem sizes, the optimized LogSumExp kernels outperform the widely-used PyKeOps reduction kernel baseline.}
    \label{tab:kernel_comparison}
\footnotesize
    \centering
\begin{tabular}{lcccc}
\toprule
&\multicolumn{4}{c}{\textbf{Wall-clock time (ms)}}\\
 \textbf{Kernel}  & $n=1024$ & $n=4096$ & $n=16384$ & $n=65536$ \\
\midrule
Baseline &  $3.77\pm 0.092$ & $14.8\pm 0.24$ & $58.9\pm 0.86$ & $474.6\pm 7.04$ \\
WT &  $\bm{0.1\pm0.008}$ & $1.37\pm0.044$ & $21.3\pm0.34$ & $337.7\pm5.36$ \\
Fused+WT &  $0.12\pm 0.0078$ & $\bm{1.30\pm 0.0426}$ & $\bm{17.13\pm 0.26}$ & $\bm{252.5\pm 2.90}$ \\
PyKeOps &  $1.79\pm0.016$ & $6.96\pm 0.015$ & $27.8\pm 0.026$ & $332\pm 0.16$ \\
\bottomrule
\end{tabular}
\end{table}

\section{Numerical Experiments}\label{sec:numerical}

In this section we compare Algorithm~\ref{alg:lamp} with alternative first-order OT/EOT algorithms. Further details, including problem generation and hyperparameter choices, can be found in Appendix~\ref{appdx:numerical}. All experimental/solver code and plotting data is implemented in Julia and is publicly available\footnote{\href{https://github.com/mxburns2022/CuLAMP.jl}{https://github.com/mxburns2022/CuLAMP.jl}

}.\\

\noindent\textbf{Competitor Solvers}: We compare to dense-reduction first-order baselines, matching the computational model targeted by LAMP. Baseline solvers include Sinkhorn algorithm~\cite{altschulerNearlinearTimeApproximation2017}, accelerated primal-dual adaptive mirror descent (APDAMD)~\cite{linEfficiencyEntropicRegularized2022}, accelerated Sinkhorn~\cite{linEfficiencyEntropicRegularized2022}, the Bregman hybrid primal-dual (HPD) algorithm~\cite{chambolleAcceleratedBregmanPrimalDual2022}, annealed Sinkhorn using warm starts as described in~\cite{kemertasEfficientAccurateOptimal2025}, and Dual Extrapolation as described in Algorithm 3 of~\cite{jambulapatiDirectTildelbraceOrbrace2019a}. Other methods were tested, such as APDAGD~\cite{dvurechenskyComputationalOptimalTransport2018} and Greenkhorn~\cite{altschulerNearlinearTimeApproximation2017}, however we focused on the top performing solvers for the purposes of this work.\footnote{APDAGD showed significant instabilities in the small-$\eta$ regime as observed in~\cite{linEfficiencyEntropicRegularized2022}, while Greenkhorn is unable to efficiently utilize a GPU due to its update scheme.} As LAMP and PDMP are equivalent up to numerical error, we do not compare them directly, as the plots would be uninformative. Solvers are set to terminate upon reaching a primal-dual gap $\leq 10^{-10}$, where the dual functions for each formulation are given in Appendix~\ref{appdx:dual}. \\
\noindent\textbf{Benchmarks}:  For head-to-head testing, we use instances from the DOTmark set~\cite{schrieberDOTmarkBenchmarkDiscrete2017} with $\|\cdot\|_p^p$ ground costs (except $p=\infty$, where ground costs are $\|\cdot\|_\infty$).

\begin{wrapfigure}{r}{0.32\linewidth}
    \centering
    \begin{subfigure}[b]{\linewidth}
        \includegraphics[width=\linewidth]{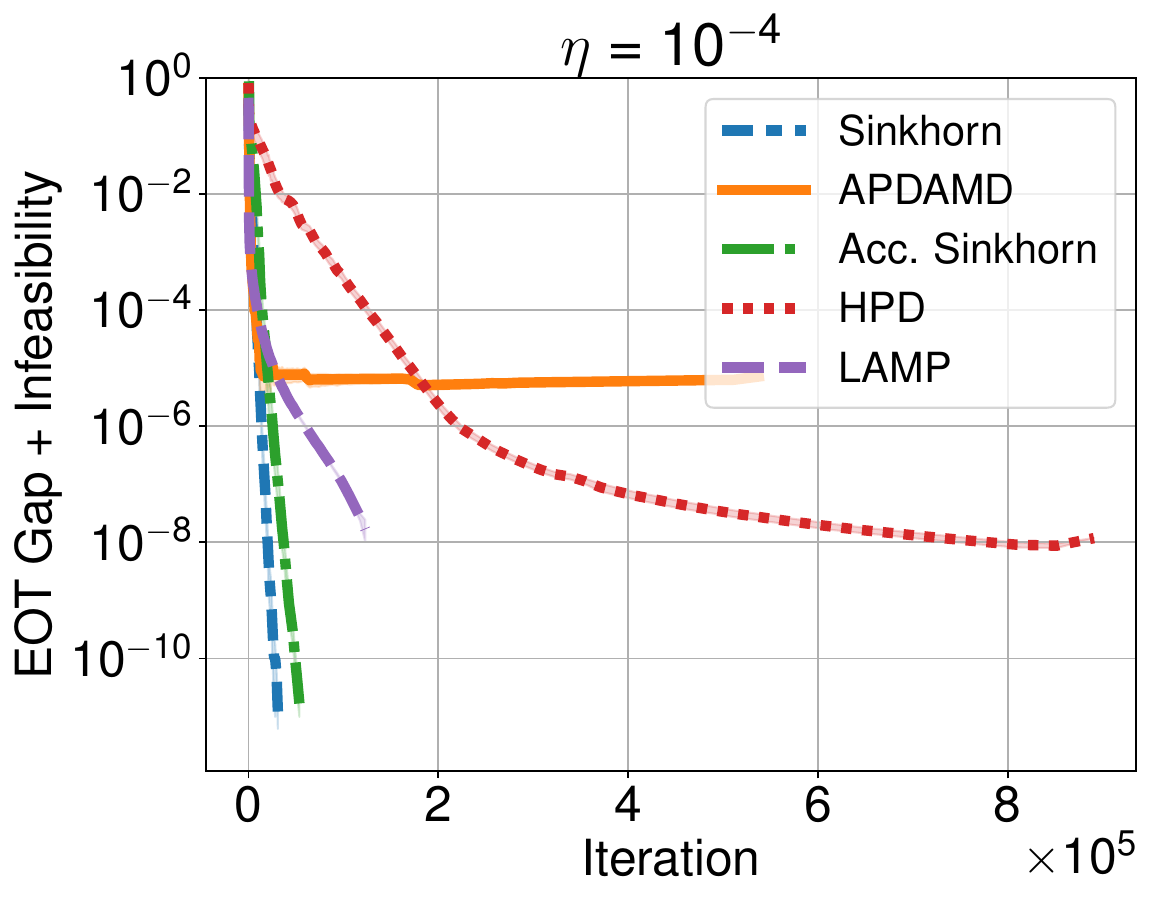}
    \end{subfigure}
    \begin{subfigure}[b]{\linewidth}
        \includegraphics[width=\linewidth]{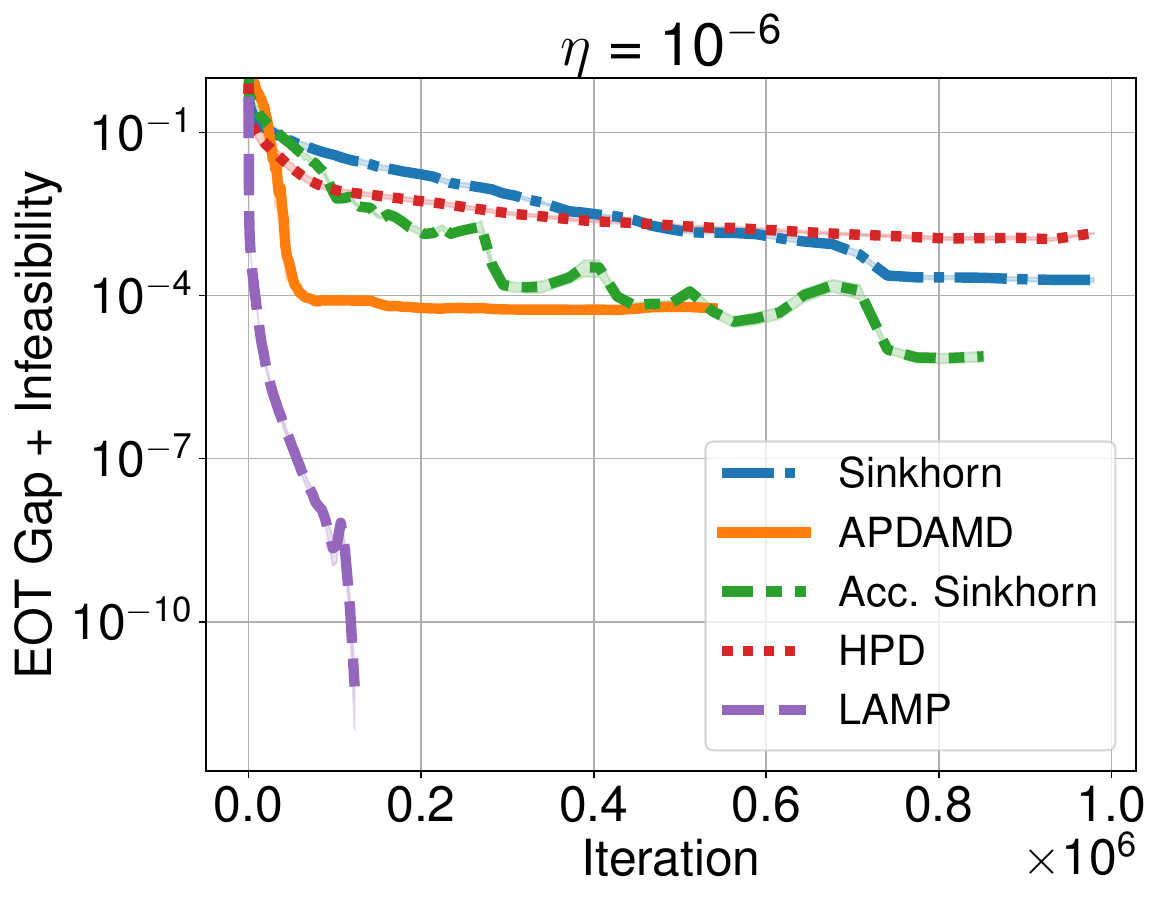}
    \end{subfigure}
    \caption{Comparison of  various OT solvers on $n=1024$ DOTmark problems with Euclidean ground costs. LAMP is particularly effective in the weakly-regularized $\eta=10^{-6}$ regime. }
    \label{fig:eot_broad}
\end{wrapfigure}
Fig.~\ref{fig:eot_broad} compares fixed-$\eta$ EOT solver performance in $n=m=1024$ DOTmark problems with $\ell_2^2$ ground costs. The y-axis on each plot shows the EOT primal gap, hence measuring convergence in the entropy-regularized problem alone. For $\eta\geq 10^{-4}$, Sinkhorn and Accelerated Sinkhorn significantly outperform the other methods tested, though LAMP outperforms several primal-dual methods (HPD and APDAMD) in this regime. However, LAMP's advantage is clear for the weakly regularized case ($\eta=10^{-6}$), where LAMP converges extremely rapidly while the other solvers appear to converge sublinearly.

We now focus on comparing unregularized  LAMP ($\eta=0$) to Sinkhorn-based methods ($\eta>0$) as the primary $\mathcal{O}(n+m)$ space baseline. Furthermore, we add temperature-annealed Sinkhorn with warm starts as described in~\cite{kemertasEfficientAccurateOptimal2025} as an annealing baseline, with $\eta_t=\max\{\eta_iq^t,\eta_f\}$ for $q\in (0,1]$. All solvers use on-the-fly, kernel-based distances with the warp-tiling+fusion optimizations discussed in Subsection~\ref{subsec:reduction}. For our comparisons, we utilize the combined objective gap + infeasibility 
\begin{equation}\label{eq:combined}
    \inner{C}{X^t-X^*}+\|C\|_\infty(\|\col(X^t)-c\|_1 + \|\row(X^t)-r\|_1),
\end{equation}
which acts as an upper bound on the cost of the rounded iterate $\operatorname{Round}(X^t,r,c)$  when $X^t$ is either row or column feasible (true for both Sinkhorn and LAMP). Exact OT costs are computed using \texttt{emd2} from PythonOT~\cite{flamaryPOTPythonOptimal2021b}.
As illustrated by Proposition~\ref{prop:infeas_bound}(a), LAMP convergence is dependent on $\|C\|_\infty$, which in turn depends on the underlying metric. Fig.~\ref{fig:metric_comp} compares each kernel OT solver on $n=m=4096$ DOTmark problems with varying ground cost. For $\ell_\infty$ and $\ell_1$ ground costs, LAMP converges extremely rapidly, outperforming both the Sinkhorn baselines. Since $\|C\|_\infty$ scales $\mathcal{O}(\sqrt{n})$ for both costs, the initial bias term in~\eqref{ineq:main_infeas_bounds} is relatively benign.

In contrast, LAMP exhibits slower convergence for $\ell_2^2$, where $\|C\|_\infty=\mathcal{O}(n)$. LAMP is competitive for much of the $\ell_2^2$ trajectory, though Sinkhorn eventually overtakes it with well-chosen regularization. This, however, highlights another benefit of LAMP. In contrast to Sinkhorn, where the value of $\eta$ may require problem-dependent tuning, unregularized LAMP ($\eta=0$) required no problem-specific tuning in our tests to obtain competitive performance.

LAMP further outperforms Sinkhorn in dataset similarity computation, shown in Fig.~\ref{fig:cell_similarity}, where the dataset consists of cell omics data from~\cite{liu2019deconvolution} and preprocessed by~\cite{huizingOptimalTransportImproves2022}. We define the costs $C_{ij}$ using four different similarity kernels: $\ell_1$ and $\ell_2^2$ costs, cosine similarities, and Pearson correlations. In each case, $\|C\|_\infty\approx 1$, leading to LAMP converging significantly faster than the Sinkhorn solvers. Appendix~\ref{appdx:numerical} provides additional description on the problem setup and specific runtime breakdowns.

\begin{figure}
    \centering
    \begin{subfigure}[]{0.3\linewidth}
        \includegraphics[width=\linewidth]{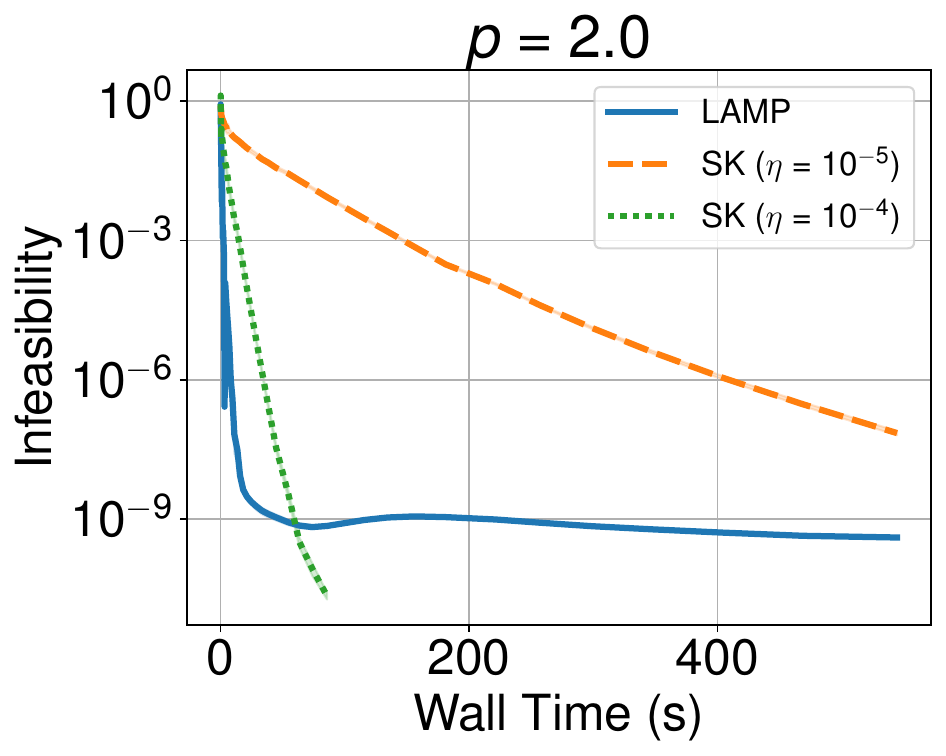}
    \end{subfigure}
    \begin{subfigure}[]{0.3\linewidth}
        \includegraphics[width=\linewidth]{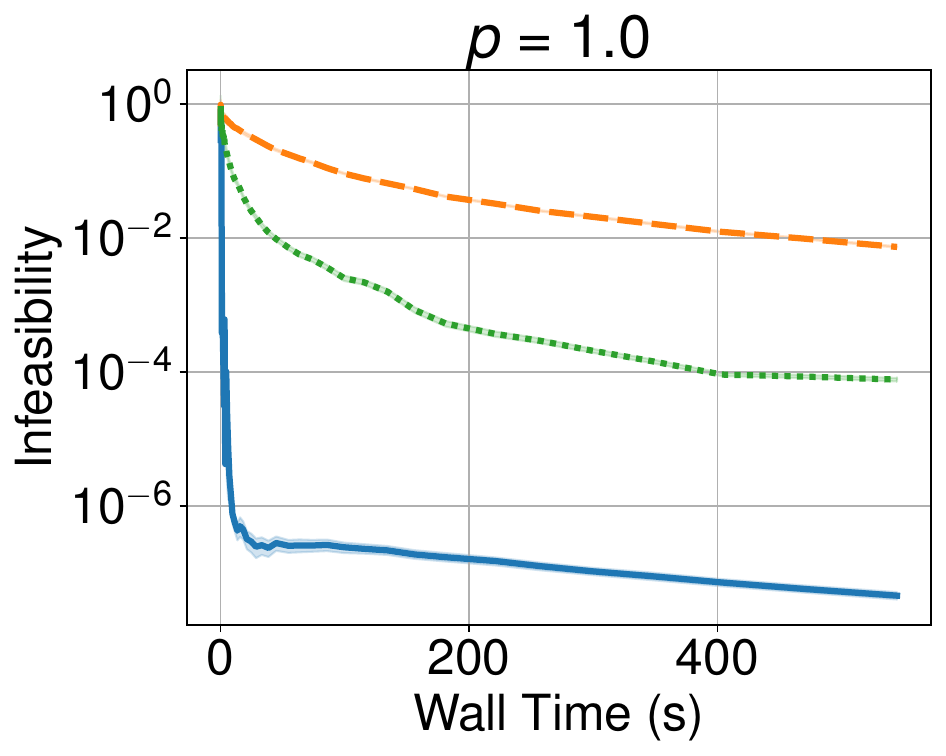}
    \end{subfigure}
    \begin{subfigure}[]{0.3\linewidth}
        \includegraphics[width=\linewidth]{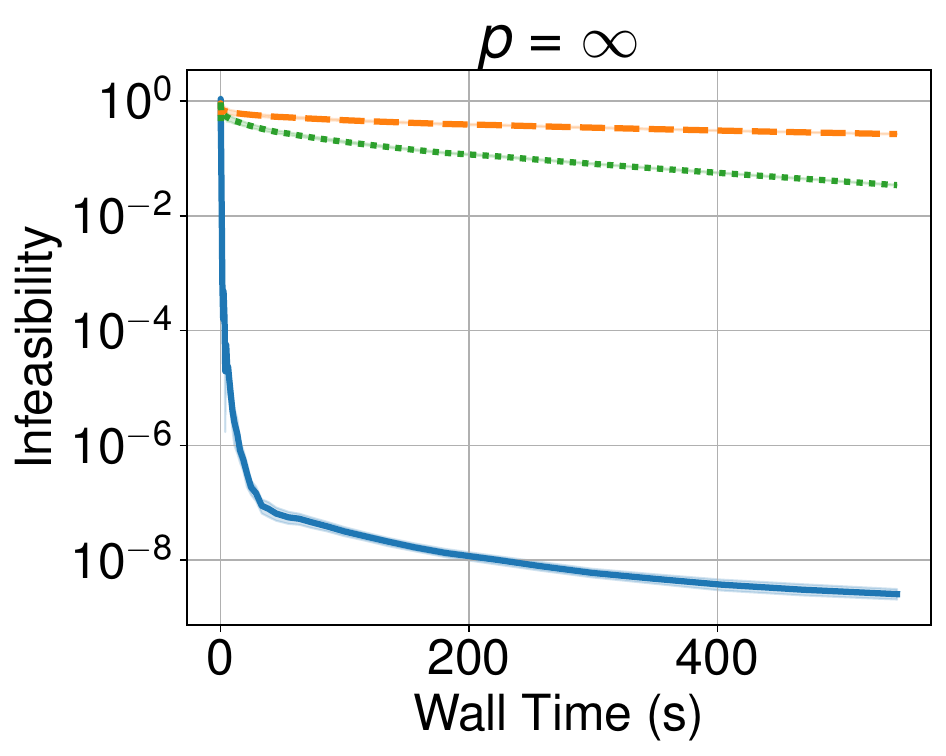}
    \end{subfigure}
    \begin{subfigure}[]{0.3\linewidth}
        \includegraphics[width=\linewidth]{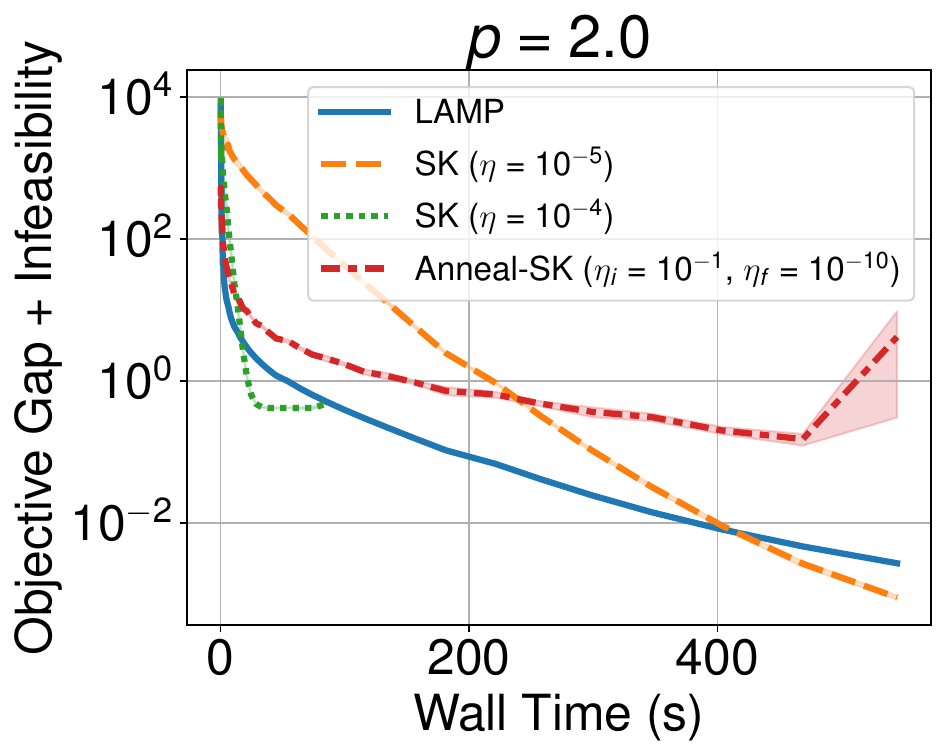}
    \end{subfigure}
    \begin{subfigure}[]{0.3\linewidth}
        \includegraphics[width=\linewidth]{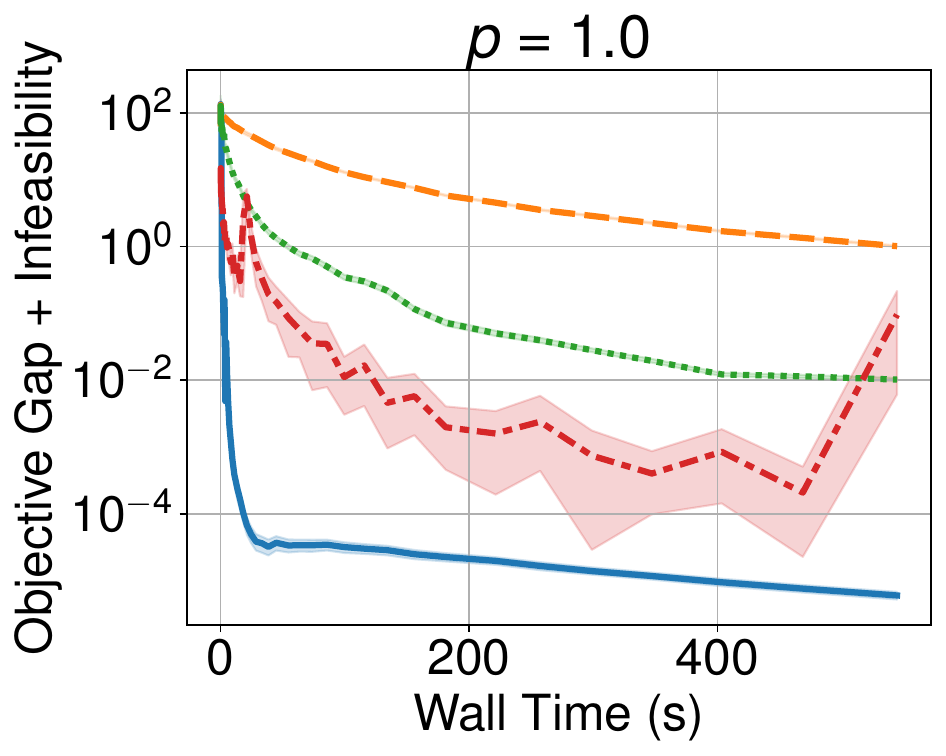}
    \end{subfigure}
    \begin{subfigure}[]{0.3\linewidth}
        \includegraphics[width=\linewidth]{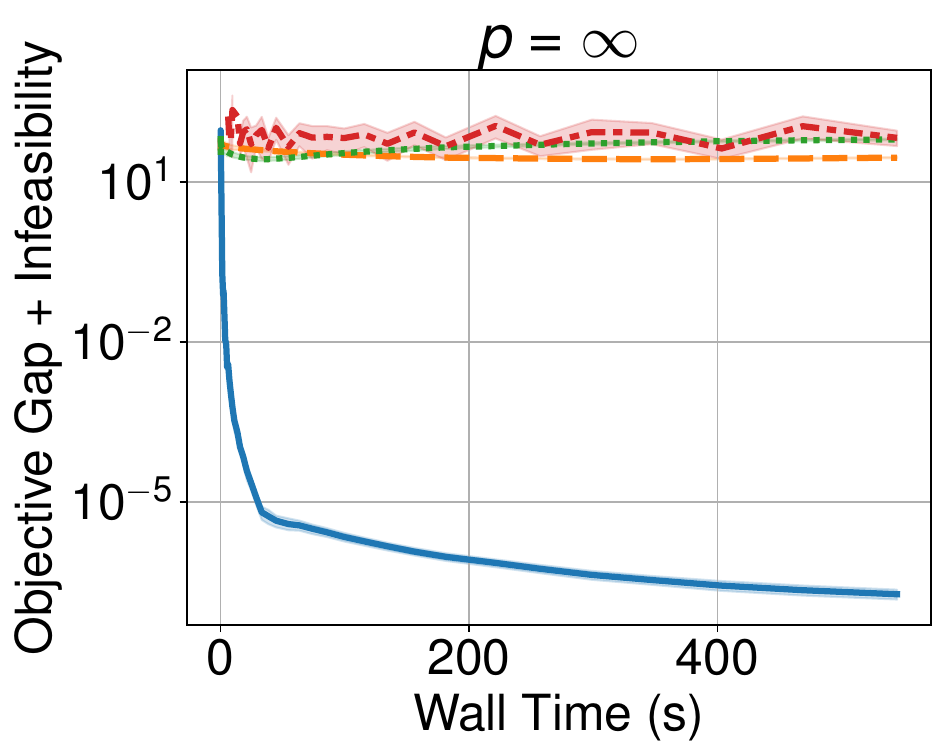}
    \end{subfigure}
    \caption{Solution trajectories on $n=m=4096$ DOTmark instances with varying ground metrics comparing [top] infeasibility and [bottom] objective gap + infeasibility as described in~\eqref{eq:combined}. LAMP's performance is metric dependent, with particularly high performance for costs with $\|C\|_\infty=o(n)$. We omit Anneal-SK (using $(\eta_i,\eta_f,q)=(10^{-1},10^{-10},0.8)$) from the infeasibility plot, as at each outer iteration the iterate is feasible (except for the last timed-out iteration).}
    \label{fig:metric_comp}
\end{figure}
We therefore broadly conclude that LAMP is particularly effective in high-accuracy settings and when costs have low-to-moderate values of $\|C\|_\infty$, while Sinkhorn may be preferable in low-accuracy/strongly-regularized problems or for large $\|C\|_\infty$ values after sufficient tuning.

Finally, we provide a proof-of-concept demonstration of LAMP on large-scale OT problems. Using the kernel-based LAMP code, we computed 512 x 512 color transfer maps with a 4 hour time limit and $\ell_2^2$ ground costs, shown in Fig.~\ref{fig:ctransfer}. Explicit primal-dual methods (such as PDMP or accelerated mirror descent methods~\cite{dvurechenskyComputationalOptimalTransport2018,linEfficiencyEntropicRegularized2022}) would require $\approx512$ GB (the equivalent of eleven L40s GPUs), while LAMP is able to run on a single L40s GPU using approximately 38 MB.

\begin{figure}[t]
    \centering
    \includegraphics[width=0.8\linewidth]{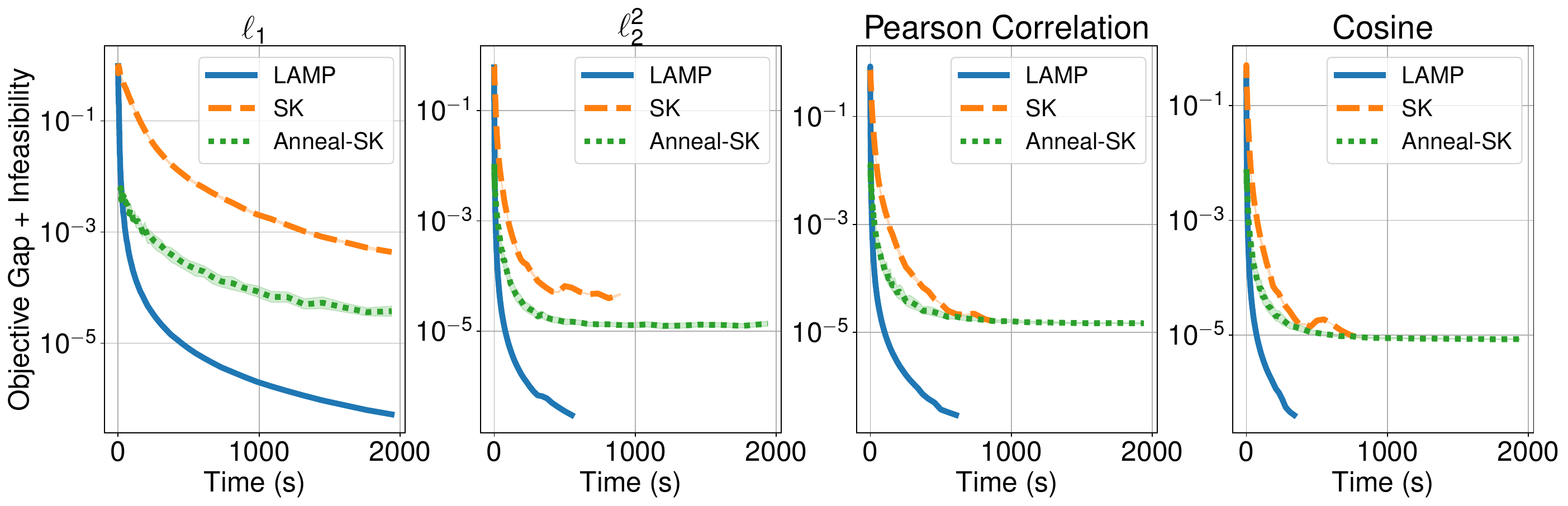}
    \caption{Comparison between Sinkhorn with $\eta=10^{-4}$ (SK), Annealed Sinkhorn (Anneal-SK) with $(\eta_i,\eta_f,q)=(10^{-2},10^{-4},0.95)$, and LAMP ($\eta=0$) in cell similarity tasks using the cell omics datasets from~\cite{huizingOptimalTransportImproves2022}. Costs are computed on the fly by comparing cell features using various similarity metrics. For these plots, we use 20 cells and 5000 features ($n=m=5000$) and average over 10 problem instances. In each case, LAMP significantly outperforms the Sinkhorn-based methods.}
    \label{fig:cell_similarity}
\end{figure}
\section{Discussion}\label{sec:discussion}

An independent line of research from~\cite{assadiSemiStreamingBipartiteMatching2022} proved a result similar to Proposition~\ref{prop:equivalence}, achieving  $\mathcal{O}(n+m)$ space complexity using an auxiliary dual sequence similar to $\nu_t$ as well as a scalar $\lambda_t$. One of the main subroutines in~\cite{assadiSemiStreamingBipartiteMatching2022} is dual extrapolation~\cite{jambulapatiDirectTildelbraceOrbrace2019a}, which includes the update
\begin{equation}\label{eq:dextrap_update}
X^{t+1}=\argmin_{X\in\Delta^{n\times m}}\{\inner{v^t}{X}+\tau\KL(X\|X^t)\},
\end{equation}
where $v^t\in \R^{n\times m}$ can be computed in $\mathcal{O}(nm)$ time using additional $\mathcal{O}(n+m)$ dual variables.
\begin{figure}[H]
\centering

\begin{minipage}[c]{0.5\textwidth}
    \centering

    \begin{minipage}[t]{0.4\linewidth}
        \centering
        \includegraphics[width=\linewidth]{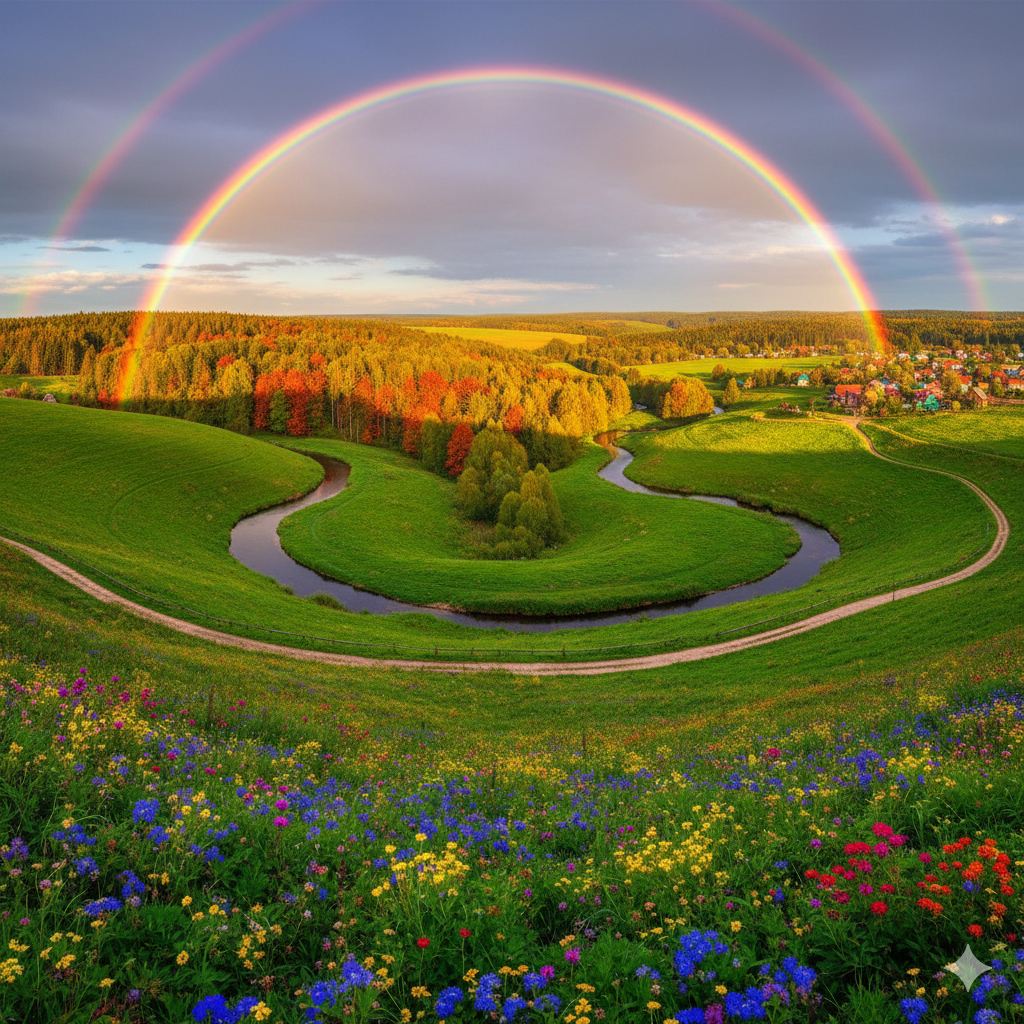}
    \end{minipage}
    \begin{minipage}[t]{0.4\linewidth}
        \centering
        \includegraphics[width=\linewidth]{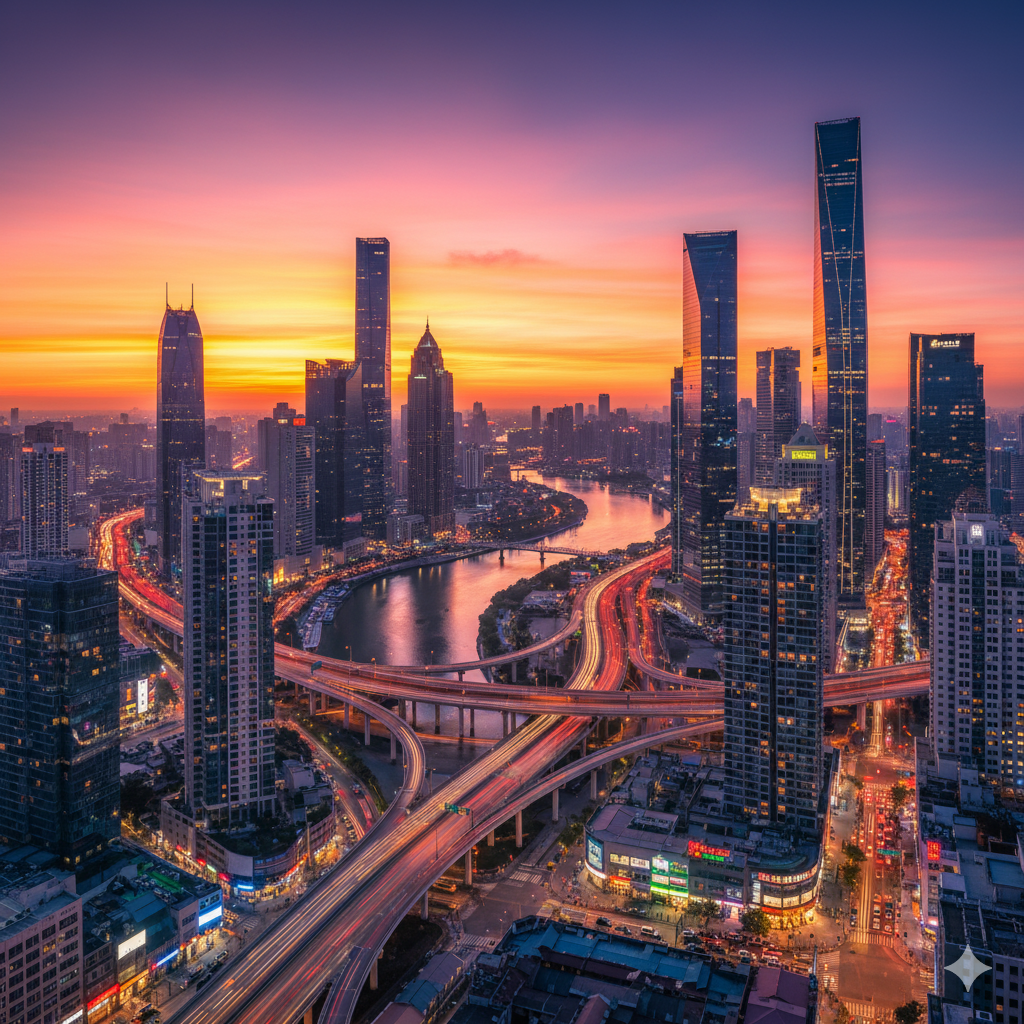}
    \end{minipage}

    \vspace{0.03\textwidth}

    \begin{minipage}[t]{0.4\linewidth}
        \centering
        \includegraphics[width=\linewidth]{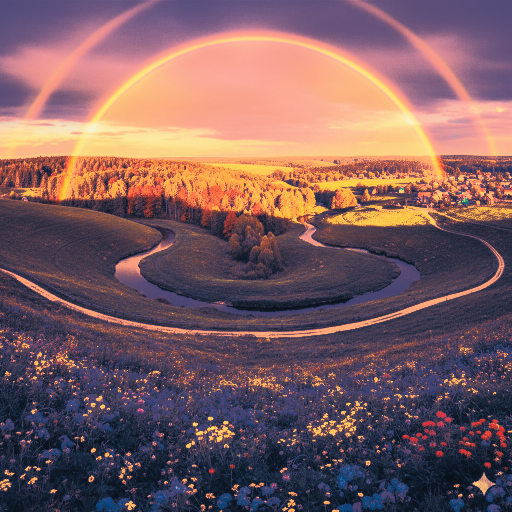}
    \end{minipage}
    \begin{minipage}[t]{0.4\linewidth}
        \centering
        \includegraphics[width=\linewidth]{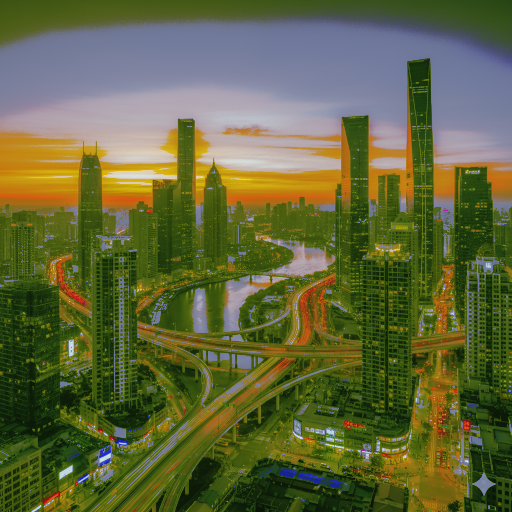}
    \end{minipage}
\caption{Color transfer on 512 x 512 images with three color channels. The original images [top] were generated by Google Gemini. The color transfers [bottom] were computed with a 4-hour timeout running on an NVIDIA L40s.}
    \label{fig:ctransfer}
\end{minipage}
\hfill
\begin{minipage}[c]{0.45\textwidth}
    \centering
    \includegraphics[width=\linewidth]{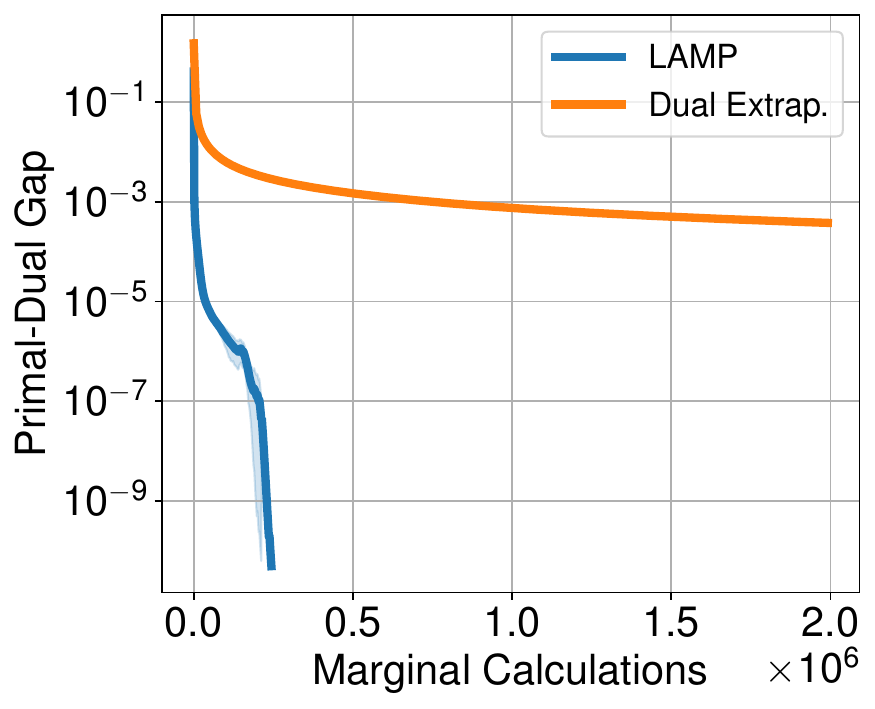}
    \caption{Comparison between the dual extrapolation method proposed in~\cite[Algorithm 3]{jambulapatiDirectTildelbraceOrbrace2019a} and LAMP. The x-axis counts the number of marginal computations performed by each algorithm, which each requires $\mathcal{O}(nm)$ time.}
    \label{fig:dextrap}
\end{minipage}
\end{figure}
The multiplicative form of~\eqref{def:closed_form_opt} is the key mechanism used in the proof of Lemma~\ref{lem:primal_md_eq_dual_avg}. Since~\eqref{eq:dextrap_update} also leads to a multiplicative update, both LAMP and dual extrapolation are able to implicitly store primal information using extra dual sequences. However, the dual extrapolation method underlying the proposed algorithm in~\cite{assadiSemiStreamingBipartiteMatching2022} has two shortcomings. First, its convergence guarantees require an average primal iterate. To maintain convergence guarantees with only dual iterates,~\cite{assadiSemiStreamingBipartiteMatching2022} propose a recovery procedure dependent on pointer-based data structures, making the subroutine difficult to efficiently map to GPUs. Second, the dual extrapolation method from~\cite{jambulapatiDirectTildelbraceOrbrace2019a} is generally slow in practice. Fig.~\ref{fig:dextrap} compares dual extrapolation to LAMP on $n=1024$ DOTmark problems with $\ell_2^2$ costs, showing that LAMP significantly outperforms the competing method. Since the dual extrapolation subroutine dominates the runtime complexity in~\cite{assadiSemiStreamingBipartiteMatching2022} (see Theorem 4.1 therein), we reasonably conjecture that the conclusions of Fig.~\ref{fig:dextrap} hold for the linear-space framework (see Theorem 5.2 of~\cite{assadiSemiStreamingBipartiteMatching2022}). 

\textbf{Theoretical Gaps} As noted in the main text, proving non-ergodic optimality guarantees for $\eta =0$ and $\tau_1,\tau_2=1/(2\|C\|_\infty)$ is a necessary next step to justify the ``empirically performant'' parameter choices. Proposition~\ref{prop:infeas_bound} provides practically useful bounds for the performant regime, however the lack of a full convergence proof is our primary theoretical limitation. 

\textbf{Extension to Second-Order Methods} Recent second-order OT solvers utilizing Krylov methods~\cite{kemertasTruncatedNewtonMethod2024,kemertasEfficientAccurateOptimal2025,wu2025pins} and/or sparse Newton iterations~\cite{wu2025pins,pan2026inexact} have shown significant speedups over first-order baselines. These methods utilize the traditional dual of~\eqref{prob:primal_eot} and are often built on a temperature-annealing framework~\cite{kemertasEfficientAccurateOptimal2025}. In this work, we have shown that mirror prox methods built on the alternative problem~\eqref{prob:spp} (the dual can be found in Appendix~\ref{appdx:dual}) can substantially outperform comparable first-order methods built on the traditional (E)OT formulation.  Our claim is limited: LAMP is simple to implement, linear space, admits computable last/best-iterate certificates (Proposition~\ref{prop:infeas_bound}), and is highly competitive in its algorithmic class (highly parallel first-order methods), not that it is a universally dominant algorithm. Our results therefore suggest the development of second-order methods targeting~\eqref{prob:spp} as a promising direction for our future work.

\bibliographystyle{abbrv}
\bibliography{refs}
\appendix

\section{Literature Review}\label{app:review}
Here we provide a brief review of relevant first-order methods for OT/EOT. Since our work focuses on first-order methods leveraging parallel operations, we omit discussion of combinatorial OT algorithms, such as~\cite{lahn2019graph}.
\begin{table}[h]
    \caption{\footnotesize Non-exhaustive summary of related works on first-order methods for OT. ``Complexity'' reflects the computational complexity required to find an $\varepsilon$-solution to the primal OT problem. Alternative references are provided for the complexity bounds if they differ from the results of the original work. ``Space complexity'' reflects the complexity required to store the algorithm iterates, excluding the cost to store the cost matrix $C$, which can often be computed on-the-fly for kernel-based costs. For PDHG, $\delta$ is the data precision (see Definition 2 in~\cite{luPDOTPracticalPrimalDual2024}).
    Acronyms are as follows: ``APDA(G,M)D'' are Adaptive Primal-Dual Accelerated (Gradient, Mirror) Descent, AAM is Accelerated Alternating Minimization, DROT is Douglas-Rachford OT, HPD is Hybrid Primal-Dual, PDASMD is Primal-Dual Accelerated Stochastic Mirror Descent, and PDHG is Primal-Dual Hybrid Gradient. } 
    \label{tab:summary}
    \centering
    \begin{tabular}{|l|l|l|}\hline
        Algorithm(s) &  Complexity & Space Complexity\footnote{Here we specifically mean the space complexity to store iterates, not the input cost matrix $C$. In many settings, such as color transfer or Wasserstein distance estimation, distances can be computed on-the-fly using computational kernels, removing the need for an explicit $C$.} \\\hline\hline
         Sinkhorn~\cite{cuturiSinkhornDistancesLightspeed2013} & 
         $\tO(nm\varepsilon^{-2})$~\cite{dvurechenskyComputationalOptimalTransport2018} & $\bm{\mathcal{O}(n+m)}$ \\ 
        Greenkhorn~\cite{altschulerNearlinearTimeApproximation2017} & 
         $\tO(nm\varepsilon^{-2})$~\cite{linEfficiencyEntropicRegularized2022} & $\bm{\mathcal{O}(n+m)}$ \\ 
          APDAGD~\cite{dvurechenskyComputationalOptimalTransport2018} & $\tO(nm(n+m)^{1/2}\varepsilon^{-1})$~\cite{linEfficiencyEntropicRegularized2022} & $\mathcal{O}(nm)$ \\
         Dual Extrapolation~\cite{jambulapatiDirectTildelbraceOrbrace2019a}
           & $\bm{\tO(nm\varepsilon^{-1})}$\footnote{\label{note1}Although Dual Extrapolation attained a state-of-the-art theoretical complexity, numerical experiments showed that the proposed method was outperformed by (theoretically inferior) Sinkhorn.}  & $\bm{\mathcal{O}(n+m)}$\cite{assadiSemiStreamingBipartiteMatching2022} \\

         AAM~\cite{guminovCombinationAlternatingMinimization2021} & $\tO(nm(n+m)^{1/2}\varepsilon^{-1})$  & $\mathcal{O}(nm)$ \\
          DROT~\cite{maiFastAccurateSplitting2021a} & $\bm{\O(nm\varepsilon^{-1})}$ & $\mathcal{O}(nm)$\\
         APDAMD~\cite{linEfficiencyEntropicRegularized2022} & $\tO(nm(n+m)^{1/2}\varepsilon^{-1})$ & $\mathcal{O}(nm)$\\
         Acc. Sinkhorn~\cite{linEfficiencyEntropicRegularized2022}  & $\tO(nm(n+m)^{1/3}\varepsilon^{-4/3})$ & $\bm{\mathcal{O}(n+m)}$\\
         HPD~\cite{chambolleAcceleratedBregmanPrimalDual2022} & $\tO(nm(n+m)^{1/2}\varepsilon^{-1})$ & $\mathcal{O}(nm)$\\
         PDASMD~\cite{luoImprovedRateFirst2023} & $\bm{\tO(nm\varepsilon^{-1})}$ & $\mathcal{O}(nm)$\\
         PDHG~\cite{luPDOTPracticalPrimalDual2024} & $\tO(nm(n+m)^{7/2}\delta+ nm(n+m)^{1/2})$ & $\mathcal{O}(nm)$\\PDMP~\cite{liFastComputationOptimal2025} & $\bm{\tO(nm\varepsilon^{-1})}$ & $\mathcal{O}(nm)$\\
         LAMP (\textbf{This Work}) & $\bm{\tO(nm\varepsilon^{-1})}$ & $\bm{\mathcal{O}(n+m)}$\\
         \hline
    \end{tabular}
\end{table}

The majority of first-order methods research targets EOT rather than directly solving OT. In the EOT family, the Sinkhorn-Knopp (a.k.a. Sinkhorn) algorithm~\cite{sinkhornConcerningNonnegativeMatrices1967} serves as the standard method: conceptually simple, highly parallel, and rapidly convergent to low-accuracy solutions. Despite its advantages, the iteration complexity of Sinkhorn scales $\tO(\varepsilon^{-2})$~\cite{dvurechenskyComputationalOptimalTransport2018}, which results in slow convergence to high-accuracy solutions. In response, a number of alternative methods relying on classical ideas from first-order methods in optimization have been proposed, including 
accelerated primal-dual methods~\cite{dvurechenskyComputationalOptimalTransport2018,guminovCombinationAlternatingMinimization2021,linEfficiencyEntropicRegularized2022, luoImprovedRateFirst2023}, 
saddle-point methods~\cite{chambolleAcceleratedBregmanPrimalDual2022}, 
and variations of Sinkhorn~\cite{altschulerNearlinearTimeApproximation2017,linEfficiencyEntropicRegularized2022}. Additionally, several methods have been proposed to improve the empirical performance of EOT algorithms, including temperature annealing~\cite{schmitzerStabilizedSparseScaling2019a,kemertasTruncatedNewtonMethod2024,kemertasEfficientAccurateOptimal2025}, and second-order subroutines, such as truncated Newton~\cite{kemertasTruncatedNewtonMethod2024,kemertasEfficientAccurateOptimal2025}, and sparse Newton~\cite{wu2025pins,pan2026inexact} iterations. 

Other methods bypass EOT to target OT directly. Douglas-Rachford splitting~\cite{maiFastAccurateSplitting2021a}, dual extrapolation~\cite{jambulapatiDirectTildelbraceOrbrace2019a,assadiSemiStreamingBipartiteMatching2022}, and PDHG~\cite{luPDOTPracticalPrimalDual2024} are among the ``direct'' OT methods that have been proposed.

Table~\ref{tab:summary} catalogues a sample of the proposed first-order OT methods according to their computational complexity (relative to problem size $n$, $m$ and accuracy $\varepsilon$) and their storage requirements. While non-exhaustive, the table captures the essence of current first-order OT solvers. Dual-only methods such as Sinkhorn, Greenkhorn, and Accelerated Sinkhorn have favorable scaling in the problem dimension $n$, $m$ and attain linear-space complexity, however they have worse dependence on the accuracy $\varepsilon$. The dual-only Dual Extrapolation implementation of~\cite{assadiSemiStreamingBipartiteMatching2022} manages to achieve state-of-the-art theoretical guarantees in linear space, however the underlying Dual Extrapolation method is empirically slow and the linear space implementation is difficult to implement in HPC environments, as discussed in Section~\ref{sec:discussion}.

\section{Dual Problems and Equivalence}\label{appdx:dual}
In this section we discuss the dual functions corresponding to the OT~\eqref{prob:primal_ot}, EOT~\eqref{prob:primal_eot}, and saddle-point~\eqref{prob:spp} problems. These functions then define the dual problems, which are useful theoretically (as we show in the next section), as well as providing a computable primal-dual gap for numerical implementation.

The dual problems for OT/EOT are obtained by first dualizing the constraints by standard Lagrangian machinery
\begin{equation}\label{prob:lagrange_ot}
    \max_{\varphi\in\R^n,\psi\in\R^m}\min_{X\in\Delta^{n\times m}}\inner{C}{X}+\inner{\row(X)-r}{\varphi} + \inner{\col(X)-c}{\psi} -\eta  H(X),
\end{equation}
where $\varphi$ and $\psi$ are the dual multipliers corresponding to the row and column constraints, respectively. If $\eta =0$, then minimizing with respect to $X$ over the simplex gives the nonsmooth problem
\begin{equation}\label{prob:dual_ot}
   \max_{\varphi\in\R^n,\psi\in\R^m}\left\{d(\varphi,\psi):=\min_{ij}\{C_{ij}+\varphi_i+\psi_j\}-\inner{r}{\varphi} - \inner{c}{\psi}\right\}.
\end{equation}
If $\eta >0$,  then minimizing with respect to $X$ over the simplex gives
\begin{equation}\label{def:dual_eot}
    \max_{\varphi\in\R^n,\psi\in\R^m} \left\{d^{\eta }(\varphi,\psi):=\softmin^{\eta }_{ij}\{C_{ij}+\varphi_i+\psi_j\}-\inner{r}{\varphi} - \inner{c}{\psi}\right\},
\end{equation}
where \[\softmin^{\eta }_{ij}\{C_{ij}+\varphi_i+\psi_j\}:=-\eta \log\biggl[\sum_{ij}\exp(-\eta ^{-1}(C_{ij}+\varphi_i+\psi_j))\biggr]\]
is the ``softmin'' function and is $\eta ^{-1}$-smooth with respect to the $\ell_2$ and $\ell_\infty$ norms~\cite[Example 5.15]{beckFirstOrderMethodsOptimization2017}.

Similarly, optimizing the primal variables $p$ in~\eqref{prob:spp} over the set of row-stochastic matrices gives the dual problem:
\begin{equation}\label{def:dual_spp}
    \max_{\theta\in[-1,1]^m}\biggl\{D(\theta)=\sum_{i=1}^nr_i\min_j\{C_{ij}+2\|C\|_\infty\theta_j\}-2\|C\|_\infty\inner{c}{\theta}\}\biggr\}, \quad \mbox{ if }  \eta =0, 
\end{equation}
 and 
\begin{equation}\label{def:dual_spp_e}
    \max_{\theta\in[-1,1]^m}\biggl\{D^{\eta }(\theta):=\sum_{i=1}^nr_i\softmin^{\eta }_j\{C_{ij}+2\|C\|_\infty\theta_j\}-2\|C\|_\infty\inner{c}{\theta}\}\biggr\}, \quad \mbox{ if }  \eta >0.
\end{equation}

\section{Deferred Proofs}\label{appdx:deferred}

First, we give a statement of the ``rounding'' from~\cite{altschulerNearlinearTimeApproximation2017} in Algorithm~\ref{alg:round}. ``Round'' takes as input a matrix $X\in\Rp^{n\times m}$ and marginals $r\in\Delta^n$, $c\in\Delta^m$ and returns a matrix $\tilde X$ satisfying $\row(\tilde{X})=r$, $\col(\tilde X)=c$.

\begin{algorithm}[H]\caption{Round{~\cite[Algorithm 2]{altschulerNearlinearTimeApproximation2017}}}\label{alg:round}
\begin{algorithmic}
    \REQUIRE $X\in\Rp^{n\times m}$, $r\in\Delta^n$, $c\in\Delta^m$.
    \STATE \textbf{Step 1)} Set $X'=\diag{x}X$ where
    $x_i=\min\left\{\frac{r_i}{\row(X)_i}, 1\right\}$.
    \STATE \textbf{Step 2)} Set $X''=X'\diag{y}$ where
    $y_j=\min\left\{\frac{c_j}{\col(X')_j}, 1\right\}$.
    \STATE \textbf{Step 3)} Compute
    $\delta_r=r-\row(X''),\quad \delta_c=c-\col(X'')$.
    \STATE \textbf{Step 4)} Set $\tilde X=X''+\|\delta_r\|_1^{-1}\delta_r\delta_c^\top$.
    \STATE \textbf{return} $\tilde X$.
\end{algorithmic}
\end{algorithm}

The following lemma, which is standard in OT literature, provides a useful property of Algorithm~\ref{alg:round}.

\begin{lemma}[{\cite[Lemma 7]{altschulerNearlinearTimeApproximation2017}}]\label{lem:jason_rounding}
    If $r\in\Delta^n$, $c\in\Delta^m$, and $X\in\Rp^{n\times m}$, then Algorithm~\ref{alg:round} takes $\mathcal{O}(nm)$ time to output a matrix $\tilde X\in \coup(r,c)$ satisfying
    \begin{equation}\label{ineq:l1_normbound_jason}
        \|X-\tilde X\|_1\leq 2\bigl[\|\row(X)-r\|_1+\|\col(X)-c\|_1\bigr].
    \end{equation}
\end{lemma}

Next, we state several technical lemmas which will be used in the proof of Lemma~\ref{lem:l1_prob_equiv}.

The following lemma states a general property of the dual problem~\eqref{def:dual_eot}, which allows for constant-offset transformations in each dual variable.

\begin{lemma}\label{lem:logsumexp_additive}
    Let $\varphi\in\R^n$, $\psi\in \R^m $ be two dual potentials. Then, for any $a\in\R$, $b\in\R$,
    \[
    d^{\eta }(\varphi + a\one_n,\psi+b\one_m)=d^{\eta }(\varphi,\psi),
    \]
    where $d^{\eta }$ is the EOT dual defined in~\eqref{def:dual_eot}.
\end{lemma}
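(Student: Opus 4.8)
The plan is to compute $d_\eta(\varphi + a\one, \psi + b\one)$ directly from the definition in \eqref{prob:dual_eot_1} and exploit the affine invariance of $\LSE$ recorded in \eqref{eqn:lse_affine}. First I would substitute $\varphi \mapsto \varphi + a\one$ and $\psi \mapsto \psi + b\one$ into the three terms of $d_\eta$. The linear terms change additively: $\inner{\varphi + a\one}{r} = \inner{\varphi}{r} + a\inner{\one}{r} = \inner{\varphi}{r} + a$ since $r \in \simp{n}$ sums to one, and similarly $\inner{\psi + b\one}{c} = \inner{\psi}{c} + b$.

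Next I would handle the $\LSE$ term. Inside the exponent, the argument becomes $-\eta^{-1}(C_{ij} - \varphi_i - a - \psi_j - b) = -\eta^{-1}(C_{ij} - \varphi_i - \psi_j) + \eta^{-1}(a + b)$. Thus the entire matrix argument of $\LSE_{ij}$ is shifted by the constant $\eta^{-1}(a+b)$ in every entry, i.e.\ by $\eta^{-1}(a+b)\one$. Applying \eqref{eqn:lse_affine} (with $\alpha = \eta^{-1}(a+b)$), the $\LSE$ value increases by exactly $\eta^{-1}(a+b)$, so the term $-\eta\LSE_{ij}(\cdots)$ decreases by $\eta \cdot \eta^{-1}(a+b) = a + b$.

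Combining the pieces, the $+a$ and $+b$ from the linear terms cancel exactly against the $-(a+b)$ from the $\LSE$ term, yielding $d_\eta(\varphi + a\one, \psi + b\one) = d_\eta(\varphi, \psi)$. There is no real obstacle here; the only point requiring a word of care is that \eqref{eqn:lse_affine} is stated for a vector argument, so I would note that when $\LSE$ is applied to the $\ntn$ matrix $-\eta^{-1}(C - \varphi\one^\top - \one\psi^\top)$ the sum runs over all $n^2$ entries and shifting every entry by a common constant $\alpha$ still produces an additive $\alpha$, exactly as in the vector case. The proof is then a two or three line computation.
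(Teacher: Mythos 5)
Your proof is correct and follows essentially the same route as the paper: the linear terms pick up $a$ and $b$ because $r,c\in\simp{n}$, the uniform shift inside the exponent moves the $\LSE$ value by $\eta^{-1}(a+b)$ via \eqref{eqn:lse_affine}, and the two contributions cancel. Your extra remark about \eqref{eqn:lse_affine} applying entrywise to the matrix argument is a fine clarification but adds nothing beyond the paper's argument.
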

\begin{proof}
    We observe that the $\LSE$ function is directionally affine along the $\one_n$-axis, i.e.,
    \[\LSE(x+\alpha\one_n) = \log\left[\sum_{i=1}^n\exp\left(x_i+\alpha\right)\right]=\LSE(x)+\alpha,\]
    for any $x\in\R^n$ and $\alpha\in\R$.
    Then, we have
    \[
    \begin{split}
        d^{\eta }(\varphi + a\one_n,\psi+b\one_m)=&-\inner{\varphi}{r} - a -\inner{\psi}{c}-b\\
        &-\eta \LSE\left(-\eta ^{-1}(C + \varphi\one_m^\top + \one_m\psi^\top)\right) + a + b=d^{\eta }(\varphi,\psi),
    \end{split}
    \]
    which proves the claim.
\end{proof}
The next result is a technical lemma regarding exact penalty formulations.
\begin{lemma}\label{lem:affine_minimizer}
    Consider the linearly constrained problem
    \begin{equation}\label{eqn:unpen_general}
    \min_{x\in Q} f(x)\quad\text{s.t.}\quad Ax=b,
    \end{equation}
    where $f:\R^n\to(-\infty,\infty]$ is a closed proper and convex function, $A\in\R^{m\times n}$, $b\in\R^m$, and $Q$ is a closed and convex set. Further assume that there exists a point $x\in\operatorname{relint}(\operatorname{dom}f)\cap \operatorname{relint}(Q)$ satisfying $Ax=b$, where $\operatorname{relint}(\operatorname{dom}f)$ is the relative interior of the domain of $f$ (i.e., Slater's condition is satisfied).
    
    Define the exact $\ell_1$ penalization as
    \begin{equation}\label{eqn:l1_pen_general}
    \min_{x\in Q} \{\phi_\mu(x)=f(x) + \mu\|Ax-b\|_1\}.
    \end{equation}
    Suppose $x^*$ is a solution of~\eqref{eqn:unpen_general} with Lagrange multiplier $\lambda^*\in\R^m$. Then, for any $\mu\geq\|\lambda^*\|_\infty$, we have $x^*\in\Argmin_{x\in Q}\phi_\mu(x)$. 
\end{lemma}
\begin{proof}
    Consider the saddle-point form of~\eqref{eqn:unpen_general} formed from the Lagrangian
    \begin{equation}\label{def:pen_spp}
         \min_{x\in Q}\max_{\lambda\in \R^m}\{\mathcal{L}(x,\lambda)= f(x)+\inner{\lambda}{Ax-b}\}.
    \end{equation}
    By standard arguments, Slater's condition implies that the pair $(x^*,\lambda^*)$ is a saddle-point of~\eqref{def:pen_spp}. By Hölder's inequality and the saddle-point property, for any $x\in Q$, we have
    \begin{align*}
        f(x^*)&= \mathcal{L}(x^*,\lambda^*)\leq  \mathcal{L}(x,\lambda^*)\\
        &=f(x)+\inner{\lambda^*}{Ax-b}\\
        &\leq f(x)+\|\lambda^*\|_\infty\|Ax-b\|_1\leq \phi_\mu(x).
    \end{align*}
    Since $f(x^*)=\phi_\mu(x^*)$, we have $x^*\in\Argmin_{x\in Q}\phi_\mu(x)$ as claimed.
\end{proof}

Therefore, if we can guarantee that $\|\psi^*\|_\infty\leq 2\|C\|_\infty$ for some optimal multiplier $\psi^*\in\R^m$, we can guarantee that~\eqref{prob:primal_eot} and~\eqref{prob:l1_ot_primal} have the same minimizer (unique minimizer, for $\eta>0$). Fortunately, we have the following bound from~\cite{linEfficiencyEntropicRegularized2022} characterizing the infinity norm of the dual multipliers. The proof follows~\cite[Lemma 3]{linEfficiencyEntropicRegularized2022}, however we show the argument here since our statement differs slightly from theirs, as they use a common constant rather than providing separate bounds for $\|\varphi^*\|_\infty$ and $\|\psi^*\|_\infty$.
\begin{lemma}\label{lem:dual_infty_bound}
    For the dual EOT problem defined in~\eqref{def:dual_eot}, there exists an optimal solution $(\varphi^*,\psi^*)$ such that
    \begin{align*}
        \|\varphi^*\|_\infty&\leq \|C\|_\infty -\eta \log\underset{1\leq i\leq n}\min\{r_i\},\\
        \;\|\psi^*\|_\infty&\leq \|C\|_\infty -\eta \log\underset{1\leq j\leq m}\min\{c_j\}.
    \end{align*}
\end{lemma}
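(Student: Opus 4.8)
The plan is not to reason about all dual optima — which, by Lemma~\ref{lem:logsumexp_additive}, are unbounded along $(\one,\one)$ — but to exhibit one convenient maximizer in matrix-scaling form and bound its entries directly; the gauge freedom of Lemma~\ref{lem:logsumexp_additive} is precisely what I will use to pick a good representative, and it is that choice, rather than the naive ``$\max$-normalization'', that avoids the extra $\eta\log n$ present in the cruder argument. Write $K_{ij}=\exp(-C_{ij}/\eta)>0$. Since $r,c$ have full support, the classical matrix-scaling/Sinkhorn existence result yields positive vectors $u,v$ with $\pi^\star:=\mathcal{D}_uK\mathcal{D}_v$ having row sums $r$ and column sums $c$. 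Setting $\varphi^\star:=\eta\log u$, $\psi^\star:=\eta\log v$, one has $\pi^\star_{ij}=\exp(-\eta^{-1}(C_{ij}-\varphi^\star_i-\psi^\star_j))$ and $\sum_{ij}\pi^\star_{ij}=\sum_i r_i=1$; comparing with \eqref{eqn:pi_mu}, this is exactly $\pi(\mu^\star)$ with normalizer $Z=1$, and since $\nabla_\varphi d_\eta=r-\pi(\mu)\one$ and $\nabla_\psi d_\eta=c-\pi(\mu)^\top\one$, the zero-gradient conditions hold, so (by concavity of $d_\eta$) the pair $(\varphi^\star,\psi^\star)$ maximizes $d_\eta$. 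Equivalently, one may start from an arbitrary maximizer, invoke its stationarity, and shift both potentials by $-\tfrac{\eta}{2}(\log Z)\one$, which stays optimal by Lemma~\ref{lem:logsumexp_additive} and normalizes $Z$ to $1$.

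Next I fix the residual gauge: the pair is invariant under $u\mapsto tu,\ v\mapsto v/t$ (the part of the symmetry in Lemma~\ref{lem:logsumexp_additive} that preserves $Z=1$), so I choose $t$ with $\|u\|_1=\|v\|_1=:s$. To control $s$, note that $0\le C_{ij}\le\|C\|_\infty$ gives $e^{-\|C\|_\infty/\eta}\|v\|_1\le (Kv)_i\le\|v\|_1$ for every $i$; combined with the row constraint $r_i=u_i(Kv)_i$ this yields $r_i/\|v\|_1\le u_i\le r_i e^{\|C\|_\infty/\eta}/\|v\|_1$, and summing over $i$ with $\sum_i r_i=1$ gives $1\le\|u\|_1\|v\|_1\le e^{\|C\|_\infty/\eta}$, i.e. $1\le s\le e^{\|C\|_\infty/(2\eta)}$.

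Substituting $\|v\|_1=s$ back into the sandwich for $u_i$ and taking $\eta\log$ yields
\[ \eta\log r_i-\eta\log s\ \le\ \varphi^\star_i\ \le\ \eta\log r_i+\|C\|_\infty-\eta\log s. \]
Since $\log r_i\le 0$ and $0\le\eta\log s\le\|C\|_\infty/2$, the right-hand side is $\le\|C\|_\infty$ and the left-hand side is $\ge\eta\log\min_k r_k-\|C\|_\infty/2\ge-(\|C\|_\infty-\eta\log\min_k r_k)$; as $-\eta\log\min_k r_k\ge 0$, both ends lie in $[-(\|C\|_\infty-\eta\log\min_k r_k),\ \|C\|_\infty-\eta\log\min_k r_k]$, giving $\|\varphi^\star\|_\infty\le\|C\|_\infty-\eta\log\min_k r_k$. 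Running the identical chain with $K^\top$, $c$, and the column constraint $c_j=v_j(K^\top u)_j$ in place of $K$, $r$, and the row constraint delivers $\|\psi^\star\|_\infty\le\|C\|_\infty-\eta\log\min_k c_k$.

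The only non-routine step is the estimate $1\le\|u\|_1\|v\|_1\le e^{\|C\|_\infty/\eta}$ together with the balancing $\|u\|_1=\|v\|_1$: this pins the gauge parameter $s$ to an interval independent of $n$, whereas the coarser normalization $\max_j v_j=1$ only gives $(Kv)_i\le n$ and leaks an additive $\eta\log n$. The accompanying subtlety is one of legitimacy — I must verify that rescaling $u,v$ (equivalently shifting $\varphi^\star,\psi^\star$) is the genuine dual symmetry of Lemma~\ref{lem:logsumexp_additive}, so that the rescaled, now-bounded pair is still an optimal dual solution of \eqref{prob:dual_eot_1}.
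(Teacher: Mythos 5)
Your proof is correct, and it takes a genuinely different route from the paper. The paper argues directly at the level of an arbitrary dual maximizer: it shifts the potentials by constants (using Lemma~\ref{lem:logsumexp_additive}) so that $\min_i\varphi^\star_i\le 0\le\max_i\varphi^\star_i$, then uses the stationarity conditions of~\eqref{prob:dual_eot_1} to bound the \emph{oscillation} $\max_i\varphi^\star_i-\min_i\varphi^\star_i\le\|C\|_\infty-\eta\log\min_i r_i$, and combines the two to get the $\ell_\infty$ bound (same for $\psi$). You instead import the classical Sinkhorn scaling existence theorem for the positive kernel $K=\exp(-C/\eta)$, identify $(\varphi^\star,\psi^\star)=(\eta\log u,\eta\log v)$ as a stationary (hence, by concavity, optimal) point with normalizer $Z=1$, and then fix the residual gauge by balancing $\|u\|_1=\|v\|_1=s$; the sandwich $1\le\|u\|_1\|v\|_1\le e^{\|C\|_\infty/\eta}$ pins $0\le\eta\log s\le\|C\|_\infty/2$ and yields two-sided entrywise bounds that give both inequalities for the \emph{same} pair, as the lemma requires (the column chain with $K^\top$ uses the same $s$). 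What each buys: your construction is more explicit — it produces existence of the maximizer as a byproduct (the paper simply posits optimal potentials) and gives per-coordinate bounds $\eta\log r_i-\eta\log s\le\varphi^\star_i\le\eta\log r_i+\|C\|_\infty-\eta\log s$ — at the cost of invoking the matrix-scaling theorem as a black box; the paper's oscillation-plus-centering argument stays entirely inside the first-order conditions of~\eqref{prob:dual_eot_1} and tracks the cited reference~\cite{linEfficiencyEntropicRegularized2022}, while both avoid the extra $\eta\log n$ of the cruder normalization, arriving at the identical constants.
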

\begin{proof}
    We first claim that there exist dual variables $\varphi^*$ and $\psi^*$ which are optimal solutions to~\eqref{def:dual_eot} satisfying
    \begin{align}
        \nonumber\min_i\varphi^*_i\leq 0\leq \max_i\varphi^*_i,\\
        \min_j\psi^*_j\leq 0\leq \max_j\psi^*_j.\label{ineq:comp_bounds}
    \end{align}
    Let $\hat\varphi^*$, $\hat\psi^*$ be a pair of optimal dual potentials. Define \[
        \Delta_\varphi=\frac{1}{2}(\max_i\hat\varphi_i^*+\min_i\hat\varphi_i^*),\quad
        \Delta_\psi=\frac{1}{2}(\max_j\hat\psi_j^*+\min_j\hat\psi_j^*),
    \]
    and set $\varphi^*=\hat\varphi^* - \Delta_\varphi\one_n$ and $\psi^*=\hat\psi^* - \Delta_\psi\one_m$.
    By Lemma~\ref{lem:logsumexp_additive}, $\varphi^*$ and $\psi^*$ are optimal dual potentials. Furthermore, by construction, they satisfy~\eqref{ineq:comp_bounds}.
    Now, taking the gradient of the dual objective in~\eqref{def:dual_eot}, we have
    \begin{equation}\label{eq:grad_dual_opt}
        0=-r_i +\exp[-\eta ^{-1}\varphi^*_i]\sum_{j=1}^m\frac{\exp[-\eta ^{-1}(C_{ij}+\psi^*_j)]}{Z},
    \end{equation}
    where
    \[Z=\sum_{k,\ell}\exp[-\eta ^{-1}(C_{k\ell}+\varphi^*_k+\psi^*_\ell)].\]
    Rearranging~\eqref{eq:grad_dual_opt}, we obtain
    \begin{equation*}
        \varphi^*_i=-\eta \log r_i + \eta \log\left(\sum_{j=1}^m\exp[-\eta ^{-1}(C_{ij}+\psi^*_j)]\right) - \eta \log Z.
    \end{equation*}
    Since each $C_{ij}\in[0, \|C\|_\infty]$ we have for all $i$, $j$
    \begin{equation*}
    \exp[-\eta ^{-1}(C_{ij}+\psi^*_j)]\geq \exp[-\eta ^{-1}(\psi^*_j)]\exp[-\eta ^{-1}\|C\|_\infty].
    \end{equation*}
    We also have $-\log r_i\geq 0$, hence
    \begin{equation*}
        \varphi^*_i\geq  \eta \log\left(\sum_{j=1}^m\exp[-\eta ^{-1}\psi^*_j]\right) -\|C\|_\infty - \eta \log Z.
    \end{equation*}
    Again using $C_{ij}\geq 0$, we have
    \begin{equation*}
        \varphi^*_i\leq -\eta \log r_i + \eta \log\left(\sum_{j=1}^m\exp[-\eta ^{-1}\psi^*_j]\right) - \eta \log Z.
    \end{equation*}
    Then we have
    \begin{align*}
        \max_{i}\varphi_i^*&\leq -\eta \min_i\log r_i+\eta \log\left(\sum_{j=1}^m\exp[-\eta ^{-1}\psi^*_j]\right)-\eta \log Z\\
        \min_{i}\varphi_i^*&\geq  \eta \log\left(\sum_{j=1}^m\exp[-\eta ^{-1}\psi^*_j]\right) - \|C\|_\infty - \eta \log Z.
    \end{align*}
    So,
    \begin{align*}
        \max_{i}\varphi_i^*-\min_{i}\varphi_i^*\leq \|C\|_\infty-\eta \log\min_{i} r_i.
    \end{align*}
    Since $\min_{i}\varphi_i^*\leq 0$ and $\max_{i}\varphi_i^*\geq 0$, we have that 
    \begin{align*}
        0\geq\min_{i}\varphi_i^*\geq -(\|C\|_\infty-\eta \log\min_{i} r_i),\\
        0\leq\max_{i}\varphi_i^*\leq \|C\|_\infty-\eta \log\min_{i} r_i,
    \end{align*}
    which implies that 
    \[
    \|\varphi^*\|_\infty \leq \|C\|_\infty-\eta \log\min_{i} r_i
    \]
    as claimed. A similar argument holds for the dual variable $\psi^*$ with the $c$ marginal in place of the $r$ marginal.  
\end{proof}
A similar result can be shown for unregularized OT with bounded costs, see~\cite[Remark 1.13]{villaniTopicsOptimalTransportation2003}.

\vspace{1em}
We now state the full equivalence result extending~\cite[Lemma 2.3]{jambulapatiDirectTildelbraceOrbrace2019a} referenced in Subsection~\ref{subsec:saddle}.

\begin{lemma}\label{lem:l1_prob_equiv}
    Given $\varepsilon>0$ and $\eta \geq 0$, let $r\in\Delta^n$, $c\in \Delta^m$, and $C\in\Rp^{n\times m}$ be the input for the (E)OT problem, and assume that $r$ and $c$ have no zero entries. Then, we have:
    \begin{itemize}
        \item[{a)}] if $\eta =0$ and $p$ is an $\varepsilon$-solution to~\eqref{prob:l1_ot_primal}, then we can map $p$ to an $\varepsilon$-solution of~\eqref{prob:primal_ot} in $\mathcal{O}(nm)$ arithmetic operations;
        \item[{b)}] if $\eta \in(0,\|C\|_\infty/\log(\max_{j}c_j^{-1})]$, then the unique minimizer $p^*$ of~\eqref{prob:l1_ot_primal} is equivalent to the unique minimizer $X^*$ of~\eqref{prob:primal_eot} under the mapping $X^*=\diag{r}p^*$.
    \end{itemize}
\end{lemma}
\begin{proof}
a) The claim directly follows from Lemma 2.3 of~\cite{jambulapatiDirectTildelbraceOrbrace2019a}, though we reproduce the proof here for completeness. Let $P(\diag{r}p)$ be the objective function of~\eqref{prob:l1_ot_primal} with $\eta =0$. First, we show that there exists an optimizer to~\eqref{prob:l1_ot_primal} which is column feasible. Let $p^*\in\{\Delta^m\}^n$ be an arbitrary optimizer to~\eqref{prob:l1_ot_primal} and denote $\diag{r}\tilde p^*=\operatorname{Round}(\diag{r}p^*, r, c)$.
By Lemma~\ref{lem:jason_rounding}, we have $\diag{r}\tilde p^*\in\coup(r,c)$ and the bound
\begin{equation}\label{ineq:l1_normbound}
    \|\diag{r}\tilde p^* - \diag{r}p^*\|_1\leq 2\|\col_r(p^*) - c\|_1.
\end{equation}

Since $\diag{r}\tilde p^*$ is column feasible (hence $\|\col_r(\tilde p^*)-c\|_1=0$), we have
\begin{align*}
    P(\diag{r}\tilde p^*)-P(\diag{r}p^*) &= \inner{C}{\diag{r}(\tilde p^*-p^*)}-2\|C\|_\infty\|\col_r(p^*) - c\|_1.
\end{align*}
Applying H\"{o}lder's inequality, we have
\begin{align*}
    P(\diag{r}\tilde p^*)&-P(\diag{r}p^*)\leq \|C\|_\infty\|\diag{r}\tilde p^*-\diag{r}p^*\|_1-2\|C\|_\infty\|\col_r(p^*) - c\|_1\\
    &\stackrel{\eqref{ineq:l1_normbound}}{\leq} 2\|C\|_\infty\|\col_r(p^*) - c\|_1-2\|C\|_\infty\|\col_r(p^*) - c\|_1=0.
\end{align*}
Then $P(\diag{r}\tilde p^*)-P(\diag{r}p^*)\leq 0$, which implies that $P(\diag{r}\tilde p^*)$ is also optimal. Since $\coup(r,c)\subseteq\{\diag{r}p:p\in\{\Delta^m\}^n\}$, we have
\[P(\diag{r}\tilde p^*)=\min_{p\in\{\Delta^m\}^n}P(\diag{r}p)\leq \min_{X\in\coup(r,c)}\inner{C}{X}.\]
Since $\diag{r}\tilde p^*:=\tilde X^*\in \coup(r,c)$, we clearly have $\min_{X\in\coup(r,c)}\inner{C}{X}\leq \inner{C}{\tilde X^*}$, hence $\inner{C}{\tilde X^*}=\min_{X\in\coup(r,c)}\inner{C}{X}$
and $\tilde X^*$ is therefore an optimizer to~\eqref{prob:primal_ot}.

Now, let $p^*\in\{\Delta^m\}^n$ be an optimizer to~\eqref{prob:l1_ot_primal} satisfying $\col_r(p^*)=c$ (whose existence we have just proved). Let $p$ be an $\varepsilon$-solution to~\eqref{prob:l1_ot_primal} and $\diag{r} \tilde p=\operatorname{Round}(\diag{r}p,r,c)$. Then
\begin{align*}
    \varepsilon \geq P(\diag{r}p)-P(\diag{r}p^*)=P(\diag{r}p)-P(\diag{r}\tilde{p})+P(\diag{r}\tilde p)-P(\diag{r}p^*).
\end{align*}
We have the lower bound
\begin{align*}
    P(\diag{r}p)-P(\diag{r}\tilde p)&=\inner{C}{\diag{r}(p-\tilde{p})}+2\|C\|_\infty\|\col_r(p)-c\|_1\\
    &\geq -\|C\|_\infty\|{\diag{r}(p-\tilde{p})}\|_1+2\|C\|_\infty\|\col_r(p)-c\|_1\stackrel{\eqref{ineq:l1_normbound_jason}}\geq 0,
\end{align*}
where the first inequality follows by Hölder's inequality and the second by Lemma~\ref{lem:jason_rounding}.
Then we have
\begin{align*}
     P(\diag{r}\tilde p)-P(\diag{r}p^*) \le \varepsilon,
\end{align*}
proving the claim.

b) 
Define $\psi^*$ as some optimal Lagrange multipliers (i.e., part of an optimal pair $(\varphi^*, \psi^*)$ for~\eqref{def:dual_eot}) for the $c$-marginal constraint. We then rewrite the constrained problem~\eqref{prob:primal_eot} as $\min_{p\in Q} f(p)\text{ s.t. }Ap=c$, where
\[f(p)=\inner{C}{\diag{r}p}-\eta  H(\diag{r}p),\] 

$A$ is the linear mapping implementing the column sum $Ap:=\col_r(p)$, and $Q=\{\Delta^m\}^n$. Let $p^*\in\Argmin_{p\in Q} f(p)\text{ s.t. }Ap=c.$ 

Then, by Lemma~\ref{lem:affine_minimizer} (noting that Slater's condition is satisfied for OT problems, since the product coupling is feasible), choosing the penalty coefficient $\mu\geq \|\psi^*\|_\infty$ implies $p^*\in\Argmin_{p\in Q}\{f(p)+\mu\|Ap-c\|_1\}$. Since $r$ has full support, $-H(\diag{r}p)$ (and therefore $f$) is strongly convex on $\{\Delta^m\}^n$~\cite[Example 5.27]{beckFirstOrderMethodsOptimization2017}, which implies that $p^*$ is the unique minimizer of both the penalized and constrained problems. 

Therefore, if $\|\psi^*\|_\infty\leq 2\|C\|_\infty$,~\eqref{prob:primal_eot} and~\eqref{prob:l1_ot_primal} have the same unique minimizer. Applying the bound from Lemma~\ref{lem:dual_infty_bound}, we have the condition
\begin{equation*}
    \|C\|_\infty - \eta \log\underset{j}\min\{c_j\}\leq 2\|C\|_\infty
\end{equation*}
or
\begin{equation*}
    -\eta \log\min_j\{c_j\}\leq \|C\|_\infty.
\end{equation*}
The condition on $\eta $ follows from rearranging.
\end{proof}

Next, we prove the closed-form solutions for the primal and dual mirror maps introduced in Subsection~\ref{sec:mirror_maps}.

\noindent
\textbf{Proof of Equation~\eqref{def:closed_form_opt}: } 
Beginning with the primal mirror map, we have
\begin{equation*}
\begin{split}
    p^+=&\mathcal{M}_\tau^{\eta }(p^0;\theta)\\
    \stackrel{\eqref{def:mirror_maps}}=&\underset{p\in\{\Delta^m\}^n}\argmin\biggl\{\inner{\diag{r}(C+\eta (\log p^0 + \one_n\one_m^\top) + 2\|C\|_\infty\theta) }{p-p^0} +\frac{1}{\tau}\KL(\diag{r}p\|\diag{r}p^0)\biggr\}.
    \end{split}
\end{equation*}
Writing the optimality conditions for each $p^+_{ij}$, we have
\begin{equation*}
    0 = \tau r_i(C_{ij} + 2\|C\|_\infty\theta_j + \eta ) + r_i\eta \tau\log p^0_{ij} + r_i\log \frac{p^+_{ij}}{p^0_{ij}} + \lambda_i,
\end{equation*}
where the $\lambda_i$ terms enforce row normalization. Solving for $p$ gives
\begin{equation*}
    p^+_{ij} = (p^0)_{ij}^{1-\tau\eta }\exp\left[-\tau(C_{ij}+2\|C\|_\infty\theta_j)-\log(Z_i)\right],
\end{equation*}
where the log-normalization terms $\log Z_i$ absorb all row constants. We therefore obtain the form in the first part of~\eqref{def:closed_form_opt}.

We now prove the closed form solution for the dual mirror map. By definition, we have
\begin{equation*}
    \theta^+=\mathcal{M}_\tau^{\eta }(\theta^0;p)\stackrel{\eqref{def:mirror_maps}}=\argmax_{\theta\in\R^m}\left\{\inner{2\|C\|_\infty(\col_r(p)-c)}{\theta-\theta^0}-\frac{1}{\tau}\D_{H_c^\alpha}(\theta\|\theta^0)\right\}.
\end{equation*}
Then for each index $j\in\{1,\dots,m\}$ we have the optimality condition
\begin{equation*}
    0=2\tau\|C\|_\infty(\col_r(p)_j-c_j) - \frac{c^\alpha_j}{2}\log\left(\frac{\theta_j^++1}{1-\theta_j^+}\frac{1-\theta_j^0}{1+\theta_j^0}\right)
\end{equation*}
which gives
\begin{equation*}
    \frac{\theta^+_j+1}{1-\theta^+_j}= \frac{1+\theta_j^0}{1-\theta_j^0}\exp\left[4\tau\|C\|_\infty\frac{\col_r(p)_j-c_j}{c^\alpha_j}\right].
\end{equation*}
After some algebraic manipulation, we obtain
\begin{equation*}
    \theta^+_j= \frac{\left(\frac{1+\theta_j^0}{1-\theta_j^0}\right)e^{4\tau\|C\|_\infty(\col_r(p)_j-c_j)/c^\alpha_j} - 1}{\left(\frac{1+\theta_j^0}{1-\theta_j^0}\right)e^{4\tau\|C\|_\infty(\col_r(p)_j-c_j)/c^\alpha_j} + 1}=\tanh\left[\frac{2\tau\|C\|_\infty}{c^\alpha_j}(\col_r(p)_j-c_j)+\frac{1}{2}\log\frac{1+\theta_j^0}{1-\theta_j^0}\right],
\end{equation*}
as claimed, where the second equality follows from $\tanh(x)=(e^{2x}-1)/(e^{2x}+1)$.\qed

\noindent\textbf{Proof of Lemma~\ref{lem:primal_md_eq_dual_avg}}
    By our assumption on $\eta$ and $\gamma$, it follows that $\tau\eta'\leq 1$, so $\theta'$ is a convex combination of $\theta^a$ and $\theta^b$, and so $\theta'\in[-1,1]^m$. 
    
    It then follows from simple algebra and the definition of $\eta'$ that $(1-\tau\eta)\gamma^{-1} = (\eta')^{-1} - \tau$. Then, using the definition of the dual-to-primal map, we have
    \begin{align}
        p^\gamma(\theta^a))^{1-\tau\eta}_{ij}&\stackrel{\eqref{def:dual_to_primal}}\propto\exp[-(1-\tau\eta)\gamma^{-1}(C_{ij}+2\|C\|_\infty\theta_j^a)] \nonumber\\
        &=\exp\left[-\left(\frac1{\eta'}-\tau\right)(C_{ij}+2\|C\|_\infty\theta_j^a)\right].\label{eq:eta_algebra_pd}
    \end{align}

    Then, using the primal mirror map $\mathcal{M}^\gamma_{\tau}(\cdot;\theta)$ in \eqref{def:closed_form_opt}, we obtain
    \begin{align*}
        \mathcal{M}^\eta_{\tau}(p^\gamma(\theta^a);\theta^b)_{ij} &\stackrel{\eqref{def:closed_form_opt}}\propto(p^\gamma(\theta^a))^{1-\tau\eta}_{ij}\exp\left[-\tau(C_{ij}+2\|C\|_\infty\theta^b_j)\right]\\
        &\stackrel{\eqref{eq:eta_algebra_pd}}\propto\exp\left[-\left(\frac1{\eta'}-\tau\right)(C_{ij}+2\|C\|_\infty\theta_j^a) -\tau(C_{ij}+2\|C\|_\infty\theta^b_j)\right]\\
        &=\exp\left[-\frac1{\eta'}(C_{ij}+2\|C\|_\infty\theta_j')\right]\stackrel{\eqref{def:dual_to_primal}}\propto p^{\eta'}(\theta')_{ij},
    \end{align*}
    where the last identity follows by the definition of $\theta'$.\qed\\
\noindent\textbf{Proof of Proposition~\ref{prop:equivalence}}
    We prove the claim by induction. First, we consider the base case $t=0$. Since $\eta_0=\infty$ and $\theta_t=\nu_t=\zero_m$, we have
    \begin{equation*}
        p^{\eta_0}(\nu^0)_{ij}\propto\exp[-\eta_0^{-1}(C_{ij}+2\|C\|_\infty \nu_j)]=1,
    \end{equation*}
    therefore, after row-wise normalization, $p^{\eta_0}(\nu^0)_{ij}=1/m=p^0_{ij}$.

    For the inductive step, suppose that the claim holds for iteration $t\geq 0$, i.e., $p^t=p^{\eta_t}(\nu^t)$. Then, using Lemma~\ref{lem:primal_md_eq_dual_avg} with $(\eta ,\gamma,\tau,\theta^a,\theta^b)=(\eta ,\eta_t,\tau_1,\nu^t,\theta^t)$, and the choice of $\eta_{t+1}$ in~\eqref{step:eta_update}, we have
    
    \begin{equation*}
        \bar p^{t+1}\stackrel{\eqref{step:pdmp_midpoint}}=\mathcal{M}_{\tau_1}^{\eta }(p^t;\theta^t)\stackrel{\eqref{eq:mirror_map_id},\eqref{step:lamp_mid_nu}}=p^{\eta_{t+1}}(\bar\nu^{t+1}).
    \end{equation*}
    Similarly, using Lemma~\ref{lem:primal_md_eq_dual_avg} with $(\eta ,\gamma,\tau,\theta^a,\theta^b)=(\eta ,\eta_t,\tau_1,\nu^t,\bar\theta^{t+1})$, and the choice of $\eta_{t+1}$ in~\eqref{step:eta_update} we have
    \begin{equation*}
        p^{t+1}\stackrel{\eqref{step:pdmp_main}}=\mathcal{M}_{\tau_1}^{\eta }(p^t;\bar\theta^{t+1})\stackrel{\eqref{eq:mirror_map_id},\eqref{step:lamp_main_nu}}=p^{\eta_{t+1}}(\nu^{t+1}),
    \end{equation*}
    therefore the claim holds for all $t\geq 0$.\qed
\section{Analysis of Algorithm~\ref{alg:lamp}}\label{appdx:analysis}

In this section, we analyze Algorithm~\ref{alg:lamp} in the performant parameter regime ($\eta =0$, $\beta=\log 3$, $\tau_1=\tau_2=1/(2\|C\|_\infty)$). Under this setting, we have $\eta_0=\infty$ and $\eta_t=2\|C\|_\infty/t$ for $t\geq 1$.

For convenience, we restate the LAMP algorithm with our empirical parameter choices in Algorithm~\ref{alg:lamp_appdx}, where we use $\tanh(\log(3)/2)=1/2$.
\begin{algorithm}[H]\caption{LAMP (Performant Parameters)}\label{alg:lamp_appdx}
    \begin{algorithmic}
        \REQUIRE $r\in\Delta^n$, $c\in\Delta^m$, $\alpha >0$, set $\nu_0=\theta_0=\zero_m$.
        \FOR{$t\geq 0$}
        \STATE Set $\eta_t=2\|C\|_\infty/t$ (or $\eta_t=\infty$ if $t=0$) and compute the midpoints
        \begin{align}
        \bar\nu^{t+1} &= \frac{t}{t+1}\nu^t + \frac{1}{t+1}\theta^t\label{def:bar_nu}\\
            \bar \theta^{t+1} &= \argmax_{\theta\in \R^m}\{\inner{\col_r(p^{\eta_t}(\nu^t))-c}{\theta} - \DHa(\theta\|\theta^t)\}\label{def:bar_theta}
        \end{align}
        \STATE Set $\eta_{t+1}=2\|C\|_\infty/(t+1)$ and compute the main sequences
        \begin{align}
         \nu^{t+1} &= \frac{t}{t+1}\nu^t + \frac{1}{t+1}\bar\theta^{t+1}\label{def:nu}\\
            \hat\theta^{t+1} &= \argmax_{\theta\in \R^m}\{\inner{\col_r(p^{\eta_{t+1}}(\bar\nu^{t+1}))-c}{\theta} - \DHa(\theta\|\theta^t)\}\label{def:theta_hat}\\
            \theta^{t+1}&=\operatorname{clip}(\hat\theta^{t+1}, -1/2,1/2).\label{def:theta}
        \end{align}
        \ENDFOR
    \end{algorithmic}
\end{algorithm}
Note that by our initialization $\theta_0=\zero_m$ and the definition of the dual mirror map in~\eqref{def:closed_form_opt}, we have $\bar\theta^{t+1}\in(-1,1)^m$ and $\hat\theta^{t+1}\in(-1,1)^m$ for all $t\geq 0$. Since $\bar\nu^{t+1}$ and $\nu^{t+1}$ are convex combinations of the $\{\theta^{t}\}$ and $\{\bar\theta_t\}$ sequences, we then have $\bar\nu^{t+1}\in(-1,1)^m$ and $\nu^{t+1}\in(-1,1)^m$ for all $t\geq 0$.

With our parameter choices, the dual-to-primal map on each iteration $t\geq 1$ is
\begin{equation}\label{def:pd_map_appdx}
p^{\eta_t}(\nu)_{ij}\propto\exp\left[-t\left(\frac{C_{ij}}{2\|C\|_\infty}+\nu_j\right)\right]
\end{equation}
We therefore observe that the choice $\tau_1=\tau_2=1/(2\|C\|_\infty)$ effectively normalizes and halves $C$ in all step computations, hence for simplicity of notation we assume that $\|C\|_\infty=1$ for intermediate results. 

We observe that the gradient of the dual entropy $H_c^\alpha(\nu)$ is
\begin{equation}\label{def:Ha_grad}
    \nabla H_c^\alpha(\nu)=\frac{1}{2}c^\alpha\odot{\log\frac{1+\nu}{1-\nu}}.\footnote{We note that this can be simplified to $\nabla H_c^\alpha(\nu)={c^\alpha}\odot{\operatorname{arctanh}(\nu)}$, however this form is not explicitly used in our analysis.}
\end{equation}
where $\odot$ is an elementwise Hadamard product. Then, for $\theta\in[-1,1]^m$, $\nu\in(-1,1)^m$, the Bregman divergence $\DHa(\theta\|\nu)$ has the form
\begin{align}
    \DHa(\theta\|\nu)&=\inner{c^\alpha}{\frac{\theta+1}{2}\log\left[\frac{\theta+1}{\nu+1}\right] +\frac{1-\theta}{2}\log\left[\frac{1-\theta}{1-\nu}\right]}\label{def:DHa_explicit}\\
    &=\inner{\frac{c^\alpha}{2}}{\log\left[\frac{1-\theta^2}{1-\nu^2}\right]} +\inner{\theta}{\nabla H_c^\alpha(\theta)-\nabla H_c^\alpha(\nu)},\label{def:DHa}
\end{align}
where the second equality can be shown by algebra.

Furthermore, define the vector-valued function $Z^t:[-1,1]^m\to\R^n$ as
    \begin{equation}\label{def:normalizer}
        Z^t(\theta)_i=\sum_{j=1}^m\exp\left[-t\left(\frac{C_{ij}}{2\|C\|_\infty}+\theta_j\right)\right],
    \end{equation}
    which is the set of row normalization constants for $p^{\eta_t}(\theta)$.

For further convenience, we denote 
\begin{equation}\label{def:primal_subs}
\begin{gathered}
    X^t:=\diag{r}p^{\eta_t}(\nu^t),\,\, \bar X^{t+1}:=\diag{r}p^{\eta_{t+1}}(\bar\nu^{t+1});\\
    \col(X^t):=\col_r(p^{\eta_t}(\nu^t)),\,\, \col(\bar X^{t+1}):=\col_r(p^{\eta_{t+1}}(\bar \nu^{t+1})).
    \end{gathered}
\end{equation}
By definition, then, we have for all $t\geq 0$ that 
\begin{equation}\label{eq:row_feasible}
    \row(X^t)=\row(\diag{r}p^{\eta_t}(\nu^t))=r,\quad \row(\bar X^{t+1})=\row(\diag{r}p^{\eta_{t+1}}(\bar\nu^{t+1}))=r.
\end{equation}

For further notational convenience, we rewrite the unregularized ($\eta=0$) saddle-point problem~\eqref{prob:spp} as a problem over the coupling $X$ rather than the row-stochastic matrix $p$
\begin{equation}\label{prob:spp_unreg}
    \min_{X\in\Delta_r^{n\times m}}\max_{\theta\in[-1,1]^m}\left\{K(X,\theta):=\inner{C}{X}+2\|C\|_\infty\inner{\theta}{\col(X)-c}\right\},
\end{equation}
where $\Delta_r^{n\times m}=\{X\in\Delta^{n\times m}:\row(X)=r\}$ is the set of row-feasible couplings.

To begin, we show that there exists a dual optimizer in the set $[-1/2,1/2]^m$, making the clipping step in~\eqref{def:theta} benign.

\begin{lemma}\label{lem:b_choice}
    Suppose that $\eta=0$. There exists a saddle-point of~\eqref{prob:spp_unreg} $(X^*,\theta^*)$ where $\theta^*\in[-1/2, 1/2]^m$ and $X^*$ is a minimizer of~\eqref{prob:primal_ot}.
\end{lemma}
\begin{proof}
    We recall that, when $\|C\|_\infty<\infty$, there exists an optimal dual pair $\varphi^*\in\R^n$, $\psi^*\in\R^m$ to~\eqref{prob:dual_ot} satisfying $\|\varphi^*\|_\infty\leq \|C\|_\infty$, $\|\psi^*\|_\infty\leq \|C\|_\infty$ (see ~\cite[Remark 1.13]{villaniTopicsOptimalTransportation2003}). Let $X^*\in \coup(r,c)$ be the corresponding primal solution, which, by strong duality, is an optimizer to~\eqref{prob:primal_ot}. Then, note that the following saddle-point problems are equivalent
    \begin{align}
        \nonumber\min_{X\in\Delta^{n\times m}}\max_{\varphi\in\R^n,\psi\in\R^m}\{&\inner{C}{X}+\inner{\varphi}{\row(X)-r}+\inner{\psi}{\col(X)-c}\}\\
        =&\min_{X\in\Delta_r^{n\times m}}\max_{\psi\in\R^m}\left\{\inner{C}{X}+\inner{\psi}{\col(X)-c}\right\}\label{prob:spp_reform1}\\
        =&\min_{X\in\Delta_r^{n\times m}}\max_{\theta\in\R^m}\left\{K(X,\theta):=\inner{C}{X}+2\|C\|_\infty\inner{\theta}{\col(X)-c}\right\}\label{prob:spp_reform2}.
    \end{align}
      Define $\theta^*=\psi^*/(2\|C\|_\infty)\in[-1/2,1/2]^m$, which satisfies $\theta^*\in[-1/2,1/2]^m$.  
    Since $(X^*,\psi^*)$ is a saddle-point of~\eqref{prob:spp_reform1}, $(X^*,\theta^*)$ is a saddle-point of~\eqref{prob:spp_reform2}. Finally, by the saddle-point property, for all $X\in\Delta_r^{n\times m}$ and $\theta\in\R^m$, we have
    \[
        K(X^*,\theta)\leq K(X^*,\theta^*)\leq K(X,\theta^*).
    \]
    Since this is also true for all $\theta\in[-1,1]^m$, then $(X^*,\theta^*)$ is a saddle-point of~\eqref{prob:spp_unreg}, completing the proof.
\end{proof}

Next, we show that the clip operation in~\eqref{def:theta} can only decrease the divergence from another point in the subset $[-1/2, 1/2]^m$. 
\begin{lemma}\label{lem:clipping_divergence}
    Let $\theta\in [-1/2,1/2]^m$ and $\hat\nu\in(-1,1)^m$, and define
    \[
        \nu=\operatorname{clip}(\hat\nu, -1/2, 1/2).
    \]
     Then,
     \begin{equation}\label{ineq:clipping_divergence}
         \DHa(\theta\|\nu)\leq \DHa(\theta\|\hat\nu).
     \end{equation}
\end{lemma}
\begin{proof}
    We begin by recalling the optimality condition for a convex optimization problem $\min_{x\in\mathcal{X}}f(x)$  (see, e.g., ~\cite[Equation 4.21]{boyd2004convex})
    \begin{equation}\label{eq:constrained_opt}
        x^*=\argmin_{x\in\mathcal{X}}f(x)\text{ \textbf{if and only if} }x^*\in\mathcal{X} \text{ and } \inner{\nabla f(x^*)}{x-x^*}\geq 0\text{ for all } x\in \mathcal{X}. 
    \end{equation}
    Now, we claim that $\nu=\argmin_{\mu\in[-1/2, 1/2]^m}\DHa(\mu\|\hat\nu)$. Suppose for some $j$ that $\hat\nu_j>1/2$, so $\nu_j=1/2$. Then, consider the convex univariate function
    \begin{equation}
       \phi_j(\gamma)= \frac{1+\gamma}{2}\log\left(\frac{1+\gamma}{1+\hat\nu_j}\right)+\frac{1-\gamma}{2}\log\left(\frac{1-\gamma}{1-\hat\nu_j}\right).
    \end{equation}
    The function $\phi_j(\gamma)$ is nonnegative over $\gamma\in[-1,1]$, with its minimum at $\hat\nu_j$ (as one can verify). Therefore, if $\hat\nu_j\in[-1/2, 1/2]$, then $\nu_j=\hat\nu_j=\argmin_{\gamma\in[-1/2, 1/2]}\phi_j(\gamma)$. Now, assume that $\hat\nu_j\not\in[-1/2, 1/2]$.
    Taking the derivative of $\phi_j$, we obtain
    \begin{equation}
        \phi_j'(\gamma)=\frac{1}{2}\log\left(\frac{1+\gamma}{1-\gamma}\frac{1-\hat\nu_j}{1+\hat\nu_j}\right).
    \end{equation}
    Note that if $\hat\nu_j>1/2$, we have $1+\hat\nu_j>3/2$ and $1-\hat\nu_j<1/2$, so
    \begin{equation}
        \phi_j'(1/2)=\frac{1}{2}\log\left(\frac{3/2}{1/2}\frac{1-\hat\nu_j}{1+\hat\nu_j}\right)=\frac{1}{2}\log\left(\frac{3/2}{1+\hat\nu_j}\right)+\frac{1}{2}\log\left(\frac{1-\hat\nu_j}{1/2}\right)<0.
    \end{equation}
    Therefore, at $1/2$, $-\phi_j'(1/2)=-\phi_j'(\nu_j)$ and therefore $\phi_j'(\nu_j)(x-\nu_j)\geq 0$ for all $x\in[-1/2,1/2]$, so $\nu_j=\argmin_{\gamma\in[-1/2, 1/2]}\phi_j(\gamma)$ by~\eqref{eq:constrained_opt}. A symmetric argument shows that $\nu_j=-1/2=\argmin_{\gamma\in[-1/2, 1/2]}\phi_j(\gamma)$ for the case where $\hat\nu_j<-1/2$. 
    
    Repeating for every coordinate $1\leq j\leq m$ and using the elementwise non-negativity of $c^\alpha$, we have that
    \begin{equation}\label{eq:nu_is_optimal}
        \nu=\argmin_{\mu\in[-1/2,1/2]^m}\left\{\sum_{j=1}^mc^\alpha_j\phi_j(\mu_j)=\DHa(\mu\|\hat\nu)\right\}.
    \end{equation}
    Therefore, $\operatorname{clip}(\hat\nu, -1/2, 1/2)$ can be understood as a Bregman projection onto the subset $[-1/2, 1/2]^m$. Now, using the three-points identity for Bregman divergences, we have
    \begin{align*}
        \DHa(\theta\|\nu)=\DHa(\theta\|\hat\nu)-\DHa(\nu\|\hat\nu) -\inner{\nabla H_c^\alpha(\nu)-\nabla H_c^\alpha(\hat\nu)}{\theta-\nu}\\
        =\DHa(\theta\|\hat\nu)-\DHa(\nu\|\hat\nu) -\inner{\nabla_\nu \DHa(\nu\|\hat\nu)}{\theta-\nu}.
    \end{align*}
    By~\eqref{eq:constrained_opt} and~\eqref{eq:nu_is_optimal},  $\inner{\nabla_\nu \DHa(\nu\|\hat\nu)}{\theta-\nu}\geq 0$ for all $\theta\in[-1/2, 1/2]^m$. Then, we obtain
    \begin{align*}
        \DHa(\theta\|\nu)\leq\DHa(\theta\|\hat\nu)-\DHa(\nu\|\hat\nu)
        \leq \DHa(\theta\|\hat\nu),
    \end{align*}
    where the second inequality follows from the nonnegativity of Bregman divergences for convex functions. We therefore conclude the proof.
\end{proof}

Next, we note a simple identity following from the definition of $p^{\eta_t}(\theta)$ and $Z^t$.
\begin{lemma}\label{lem:pd_obj_relation}
    Let $\theta\in [-1,1]^m$ and $t\geq 1$ and define $\tilde X=\diag{r}p^{\eta_t}(\theta)$. Then, we have the following identity
    \begin{equation}\label{eq:pd_obj_relation}
        -\inner{r}{\log Z^t(\theta)}-t\inner{\col(\tilde X)}{\theta}=\frac{t}{2}\inner{C}{\tilde X}-H(\tilde X)+H(r).
    \end{equation}
\end{lemma}
\begin{proof}
    First, note that $Z^t(\theta)_i$ is the normalization constant for the $i^{th}$ row of $p^{\eta_t}(\theta)$. Then, from~\eqref{def:pd_map_appdx}, we have
    \begin{align*}
       -H(\tilde X)=&\inner{\diag{r}p^{\eta_t}(\theta)}{\log \diag{r} p^{\eta_t}(\theta)}\\
       =&\inner{\diag{r}p^{\eta_t}(\theta)}{\log p^{\eta_t}(\theta)}+\inner{r}{\log r}\\
       \stackrel{\eqref{def:pd_map_appdx}}=&-t\inner{\diag{r}p^{\eta_t}(\theta)}{\frac{C}{2}+ \one\theta^\top}-\inner{\diag{r}p^{\eta_t}(\theta)}{\log Z^t(\theta)}-H(r)\\
       =&-\frac{t}{2}\inner{\tilde X}{C}-t\inner{\col(\tilde X)}{\theta}-\inner{r}{\log Z^t(\theta)}-H(r).
    \end{align*}
    Rearranging yields the result.
\end{proof}

We start by stating consequences of the optimality conditions in~\eqref{def:bar_nu}-\eqref{def:theta}.
\begin{lemma}\label{lem:logterms}
    For all $t\geq 0$, the following equalities hold
    \begin{itemize}
        \item[a)]
        \begin{equation}\label{eq:logterm_primal}
            \log X^{t+1}-\log X^{t}=-\frac{1}{2}C-\one(\bar\theta^{t+1})^\top-\log Z^{t+1}(\nu^{t+1})\one^\top+\log Z^{t}(\nu^{t})\one^\top,
        \end{equation}
        \item[b)]
        \begin{equation}\label{eq:opt_dual_mid}
            \nabla H_c^\alpha(\bar \theta^{t+1})-\nabla H_c^\alpha( \theta^t)=\col(X^{t})-c,
        \end{equation}
        \item[c)]
        \begin{equation}\label{eq:opt_dual_hat}
            \nabla H_c^\alpha(\hat \theta^{t+1})-\nabla H_c^\alpha( \theta^t)=\col(\bar X^{t+1})-c,
        \end{equation}
    \end{itemize}
\end{lemma}
\begin{proof}
    a) Using the definition of the dual-to-primal map~\eqref{def:pd_map_appdx} and canceling the common $\log (r\one^\top)$ terms gives 
    \begin{align*}
      \log X^{t+1}-\log X^{t}\stackrel{\eqref{def:primal_subs}}=& \log p^{\eta_{t+1}}(\nu^{t+1})-\log p^{t}(\nu^{t})\\
      \stackrel{\eqref{def:pd_map_appdx}}=&-(t+1)\left(\frac{1}{2}C+\one(\nu^{t+1})^\top\right)+t\left(\frac{1}{2}C+\one(\nu^{t})^\top\right)\\&-\log Z^{t+1}(\nu^{t+1})\one^\top+\log Z^{t}(\nu^{t})\one^\top\\
      =&-\frac{1}{2}C+\one\left(t\nu^t-(t+1)\nu^{t+1}\right)^\top-\log Z^{t+1}(\nu^{t+1})\one^\top+\log Z^{t}(\nu^{t})\one^\top\\
      \stackrel{\eqref{def:nu}}=&-\frac{1}{2}C-\one(\bar\theta^{t+1})^\top-\log Z^{t+1}(\nu^{t+1})\one^\top+\log Z^{t}(\nu^{t})\one^\top,
    \end{align*}
    where the final line follows from the $\nu^{t+1}$ update.

    b) The optimality conditions for problem~\eqref{def:bar_theta} give 
    \begin{equation}
    0=\col(X^t)-c -\nabla H_c^\alpha(\bar\theta^{t+1})+\nabla H_c^\alpha(\theta^t).
    \end{equation}
    Rearranging gives the result. Statement c) follows from the same logic, noting that the optimality conditions for problem~\eqref{def:theta_hat} give
    \begin{equation}
        0=\col(\bar X^{t+1})-c -\nabla H_c^\alpha(\hat\theta^{t+1})+\nabla H_c^\alpha(\theta^t).
    \end{equation}
\end{proof}

Using the primal optimality conditions in Lemma~\ref{lem:logterms}, we can show the following identity for differences of KL-divergence terms.
\begin{lemma}\label{lem:kl_diff}
    For all $t\geq 0$, the following identities hold
    \begin{align}
       \nonumber \KL(\bar X^{t+1}\|X^{t})-\KL(\bar X^{t+1}\|X^{t+1})=&
        -\inner{\col(\bar X^{t+1})}{\bar\theta^{t+1}}
        -\frac{1}{2}\inner{C}{\bar X^{t+1}}\\
        &-\inner{r}{\log Z^{t+1}(\nu^{t+1})}+\inner{r}{\log Z^{t}(\nu^{t})}.\label{eq:kl_diff}
    \end{align}
\end{lemma}
\begin{proof}
    Note that by the definition of the KL-divergence $\KL$, we have
    \begin{align*}
        \KL(\bar X^{t+1}\|X^{t})-&\KL(\bar X^{t+1}\|X^{t+1})=\inner{\bar X^{t+1}}{\log X^{t+1}-\log X^{t}}\\
        \stackrel{\eqref{eq:logterm_primal}}=&\inner{\bar X^{t+1}}{-\frac{1}{2}C-\one(\bar\theta^{t+1})^\top-\log Z^{t+1}(\nu^{t+1})\one^\top+\log Z^{t}(\nu^{t})\one^\top},
    \end{align*}
    where the second line follows by Lemma~\ref{lem:logterms}. Rearranging and using $\bar X^{t+1}\one=r$ and $ (\bar X^{t+1})^\top\one=\col(\bar X^{t+1})$ gives the result.
\end{proof}


\begin{lemma}\label{lem:Dha_diff}
    For all iteration $t\geq 0$, we have the following identity
    \begin{align}
        \nonumber\DHa(\theta^*\|\hat\theta^{t+1})-\DHa(\theta^*\|\theta^t)+\DHa(\bar\theta^{t+1}\|\theta^t)-&\DHa(\bar\theta^{t+1}\|\hat\theta^{t+1})
        \\
        =&\inner{\bar\theta^{t+1} - \theta^{*}}{\col(\bar X^{t+1})-c}.\label{eq:dual_div_diff}
    \end{align}
\end{lemma}
\begin{proof}
    Using the definition of $\DHa$ in~\eqref{def:DHa} and expanding terms, we obtain
    \begin{align*}
       \DHa(\theta^*&\|\hat\theta^{t+1}) -\DHa(\theta^*\|\theta^t)+\DHa(\bar\theta^{t+1}\|\theta^t)-\DHa(\bar\theta^{t+1}\|\hat\theta^{t+1})\\
        \stackrel{\eqref{def:DHa}}=&\inner{\frac{c^\alpha}{2}}{\log\frac{1-(\theta^*)^2}{1-(\hat\theta^{t+1})^2}}-\inner{\frac{c^\alpha}{2}}{\log\frac{1-(\theta^*)^2}{1-(\theta^t)^2}} +\inner{\theta^*}{\nabla H_c^\alpha(\theta^{t})-\nabla H_c^\alpha(\hat\theta^{t+1})}\\
        &+\inner{\frac{c^\alpha}{2}}{\log\frac{1-(\bar\theta^{t+1})^2}{1-(\theta^t)^2}}-\inner{\frac{c^\alpha}{2}}{\log\frac{1-(\bar\theta^{t+1})^2}{1-(\hat\theta^{t+1})^2}} +\inner{\bar\theta^{t+1}}{\nabla H_c^\alpha(\hat\theta^{t+1})- \nabla H_c^\alpha(\theta^{t})}\\
        =&\inner{\bar\theta^{t+1} - \theta^{*}}{\nabla H_c^\alpha(\hat\theta^{t+1})- \nabla H_c^\alpha(\theta^{t})}\\
        \stackrel{\eqref{eq:opt_dual_hat}}=&\inner{\bar\theta^{t+1} - \theta^{*}}{\col(\bar X^{t+1})-c},
    \end{align*}
    where the second equality follows by algebra and the final line follows by Lemma~\ref{lem:logterms}(c).
\end{proof}

With the primary identities/inequalities established, we now prove a single-step bound for Algorithm~\ref{alg:lamp_appdx}.
\begin{lemma}\label{lem:single_step}
 Let $(X^*,\theta^*)\in\coup(r,c)\times[-1/2,1/2]^m$ be a saddle-point of~\eqref{prob:spp_unreg} where $X^*$ is an optimizer of~\eqref{prob:primal_ot}. Then, for all $t\geq 0$, we have the inequality
    \begin{align}
        \nonumber\KL(X^*\|X^t)&-\KL(X^*\|X^{t+1})\geq\frac{1}{2}\inner{\bar X^{t+1}-X^*}{C}+\inner{\col(\bar X^{t+1})-c}{\theta^*}\\
        \nonumber&-\DHa(\theta^*\|\theta^{t})+\DHa(\theta^*\|\hat\theta^{t+1})+\DHa(\bar\theta^{t+1}\|\theta^{t})-\DHa(\bar\theta^{t+1}\|\hat\theta^{t+1}) \\
        &+ \KL(\bar X^{t+1}\|X^t)-\KL(\bar X^{t+1}\|X^{t+1}).\label{ineq:single_step}
        \end{align}
\end{lemma}
\begin{proof}
    First, we note that Lemma~\ref{lem:b_choice} implies that such a saddle-point exists. Then, expanding the definition of the KL-divergence gives
    \begin{align*}
        \KL(X^*&\|X^t)-\KL(X^*\|X^{t+1}) - \KL(\bar X^{t+1}\|X^t) + \KL(\bar X^{t+1}\|X^{t+1}) \\
        =& \inner{\bar X^{t+1}-X^*}{\log X^t-\log X^{t+1}}\\
        \stackrel{\eqref{eq:logterm_primal}}=&\frac{1}{2}\inner{\bar X^{t+1}-X^*}{C}\\
        &+\inner{\col(\bar X^{t+1})-c}{\bar\theta^{t+1}}
        +\inner{\bar X^{t+1}-X^*}{(\log Z^{t+1}(\nu^{t+1})-\log Z^{t}(\nu^t))\one^\top}\\
        =&\frac{1}{2}\inner{\bar X^{t+1}-X^*}{C}+\inner{\col(\bar X^{t+1})-c}{\hat\theta^{t+1}}
        +\inner{\col(\bar X^{t+1})-c}{\bar\theta^{t+1}-\hat\theta^{t+1}}
        ,
        \end{align*}
        where the second line follows by Lemma~\ref{lem:logterms} and the third line by algebra and the fact that $(\bar X^{t+1}-X^*)\one=0$ by~\eqref{eq:row_feasible} and the row feasibility of $X^*$. Adding and subtracting $\col(X^t)$ in the second inner product term then yields
        \begin{align}
        \nonumber\KL(X^*\|X^t)-&\KL(X^*\|X^{t+1})=\frac{1}{2}\inner{\bar X^{t+1}-X^*}{C}+\inner{\col(\bar X^{t+1})-c}{\hat\theta^{t+1}}\\
        \nonumber&+\inner{\col(\bar X^{t+1})-\col(X^t)}{\bar\theta^{t+1}-\hat\theta^{t+1}}
        +\inner{\col( X^t)-c}{\bar\theta^{t+1}-\hat\theta^{t+1}}\\
        &
        +\KL(\bar X^{t+1}\|X^t)-\KL(\bar X^{t+1}\|X^{t+1}).\label{eq:kl_difference_e7}
    \end{align}
    Now we can begin to utilize the dual optimality conditions. First, from~\eqref{def:bar_theta} and~\eqref{def:theta_hat}, we have
    \begin{align}
        \nonumber\inner{\col(\bar X^{t+1})-\col(X^t)}{\bar\theta^{t+1}-\hat\theta^{t+1}}\stackrel{\eqref{eq:opt_dual_mid}\eqref{eq:opt_dual_hat}}=-\inner{\nabla H_c^\alpha(\hat\theta^{t+1})-\nabla H_c^\alpha(\bar\theta^{t+1})}{\hat\theta^{t+1}-\bar\theta^{t+1}}\\
        =-\DHa(\hat\theta^{t+1}\|\bar\theta^{t+1})-\DHa(\bar\theta^{t+1}\|\hat\theta^{t+1})\label{eq:dual_div_difference},
    \end{align}
    where the second equality can be verified by simple algebra.
    Next, we have
    \begin{align}
        \nonumber-\inner{\col(X^t)-c}{\hat\theta^{t+1}-\bar\theta^{t+1}}\stackrel{\eqref{eq:opt_dual_mid}}=-\inner{\nabla H_c^\alpha(\bar\theta^{t+1})-\nabla H_c^\alpha(\theta^{t})}{\hat\theta^{t+1}-\bar\theta^{t+1}}\\
        =\DHa(\hat\theta^{t+1}\|\bar\theta^{t+1})-\DHa(\hat\theta^{t+1}\|\theta^t)+\DHa(\bar\theta^{t+1}\|\theta^t),\label{eq:dual_three_point}
    \end{align}
    Finally, using the three-point inequality from~\cite[Lemma 3.2]{chenConvergenceAnalysisProximalLike2006} applied to the negation of~\eqref{def:theta_hat} using the definitions in~\eqref{def:primal_subs}, we obtain 
    \begin{align}
        \nonumber\inner{\col(\bar X^{t+1})-c}{\hat\theta^{t+1}}\geq& \inner{\col(\bar X^{t+1})-c}{\theta^*} - \DHa(\theta^*\|\theta^t)\\
        &+\DHa(\hat\theta^{t+1}\|\theta^t)+\DHa(\theta^*\|\hat\theta^{t+1}).\label{ineq:hat_optimality}    \end{align}

    Combining~\eqref{eq:dual_div_difference},~\eqref{eq:dual_three_point}, and~\eqref{ineq:hat_optimality} with~\eqref{eq:kl_difference_e7}, we obtain the result~\eqref{ineq:single_step}.
\end{proof}

We now prove Proposition~\ref{prop:infeas_bound}. For expository purposes, we state the two conclusions (a) and (b) as separate lemmas, each following from Lemma~\ref{lem:single_step}.
\begin{lemma}\label{lem:last_iterate}
    For all $t\geq 1$, we have the inequality
    
    \begin{equation}
        \inner{\widetilde{X}^t}{C}-\min_{X\in\coup(r,c)}\inner{X}{C}\leq 4\|C\|_\infty\|\col(X^t)-c\|_1 + \frac{2\|C\|_\infty\log m}{t}.
    \end{equation}
    where $\widetilde{X}^t=\operatorname{Round}(X^t, r, c)$.
\end{lemma}
\begin{proof}        
        Starting from Lemma~\ref{lem:single_step} and applying Lemmas~\ref{lem:kl_diff} and~\ref{lem:Dha_diff} and algebra, we obtain
        \begin{align*}
        \KL(X^*\|X^t)&-\KL(X^*\|X^{t+1})\stackrel{\eqref{ineq:single_step}}\geq\frac{1}{2}\inner{\bar X^{t+1}-X^*}{C}+\inner{\col(\bar X^{t+1})-c}{\theta^*}\\
        &-\DHa(\theta^*\|\theta^{t})+\DHa(\theta^*\|\hat\theta^{t+1}) +\DHa(\bar\theta^{t+1}\|\theta^{t})-\DHa(\bar\theta^{t+1}\|\hat\theta^{t+1}) \\
        &+ \KL(\bar X^{t+1}\|X^t)-\KL(\bar X^{t+1}\|X^{t+1})\\
        \stackrel{\eqref{eq:kl_diff}\eqref{eq:dual_div_diff}}=&-\frac{1}{2}\inner{X^*}{C}-\inner{\bar\theta^{t+1}}{c} + \inner{r}{\log Z^{t}(\nu^{t}) - \log Z^{t+1}(\nu^{t+1})}\\
        \stackrel{\eqref{def:nu}}=&-\frac{1}{2}\inner{X^*}{C}+\inner{c}{t\nu^t-(t+1)\nu^{t+1}} + \inner{r}{\log Z^{t}(\nu^{t}) - \log Z^{t+1}(\nu^{t+1})}
    \end{align*}
    where the last two equalities follow from algebra.

    Summing from $0$ to $t-1$ for $t\geq 1$, we have
    \begin{align*}
        \KL(X^*\|X^0)&-\KL(X^*\|X^{t})\geq-\frac{t}{2}\inner{X^*}{C}+t\inner{\col(X^t)-c}{\nu^{t}}\\
        &-t\inner{\col(X^t)}{\nu^{t}}-\inner{r}{\log Z^{t}(\nu^{t})} +\inner{r}{\log Z^{0}(\nu^{0})}\\
        \stackrel{\eqref{def:primal_subs}\eqref{eq:pd_obj_relation}}=&\frac{t}{2}\inner{X^t-X^*}{C}+t\inner{\col(X^t)-c}{\nu^{t}}-H(X^t)+H(r)+\log m\\
        \geq& \frac{t}{2}\inner{X^t-X^*}{C}+t\inner{\col(X^t)-c}{\nu^{t}},
        \end{align*}
    where the equality follows from Lemma~\ref{lem:pd_obj_relation} with $(\theta,\tilde X)=(\nu^t,X^t)$, and the facts that $\eta_0^{-1}=0$ and
    \[
    \inner{r}{\log Z^0(\nu^0)}=\sum_{i=1}^nr_i\LSE(\zero_m) = \sum_{i=1}^nr_i \log m=\log m.
    \]
    The final line follows from
    \[
    -H(X^t)+H(r)\geq-H(X^t)=-\sum_{i=1}^nr_iH(p_{i:})\geq -\sum_{i=1}^nr_i\log m = -\log m,\]
    where we use the fact that each row $p_{i:}\in\Delta^m$.
    
    Rearranging, discarding the negative $\KL(X^*\|X^{t})$ term and dividing both sides by $t/2$ gives,
    \begin{align}
    \nonumber\inner{X^t-X^*}{C}&\leq 2\inner{-\nu^t}{\col(X^t)-c}+\frac{2\KL(X^*\|X^0)}{t}\\
   \nonumber &\leq 2\|\col(X^t)-c\|_1+\frac{2\KL(X^*\|X^0)}{t}\\
    &\leq 2\|\col(X^t)-c\|_1+\frac{2\log m}{t},\label{ineq:ub_telescope_last_iter}
    \end{align}
    where the second inequality follows from Hölder's inequality and using $\|\nu^t\|_\infty\leq 1$ and the final line from the choice $X^0=\diag{r}(1/m)^{n\times m}$. 
    
    The lower bound follows from defining $\widetilde{X}^t=\operatorname{Round}(X^t, r, c)$ and applying Lemma~\ref{lem:jason_rounding} to obtain
    \begin{align}
        \nonumber\inner{X^t-X^*}{C}=&\inner{\widetilde{X}^t-X^*}{C}+\inner{X^t-\widetilde{X}^t}{C}\\
        \nonumber\geq&\inner{\widetilde{X}^t-X^*}{C}-\|C\|_\infty\|X^t-\widetilde{X}^t\|_1\\
        \label{ineq:lb_tilde_X}\stackrel{\eqref{ineq:l1_normbound_jason}}\geq&\inner{\widetilde{X}^t-X^*}{C}-2\|\col(X^t)-c\|_1,
    \end{align}
    where the first inequality follows from Hölder's inequality and the second from Lemma~\ref{lem:jason_rounding}. Combining~\eqref{ineq:ub_telescope_last_iter} with~\eqref{ineq:lb_tilde_X}  gives the result for $\|C\|_\infty=1$. If $\|C\|_\infty\neq 1$, then applying the analysis to the normalized cost $C/\|C\|_\infty$  and multiplying both sides by $\|C\|_\infty$ gives the claimed inequality.
\end{proof}

\begin{lemma}\label{lem:summability}
    Let $(X^*,\theta^*)\in\coup(r,c)\times[-1/2,1/2]^m$ be a saddle-point of problem~\eqref{prob:spp_unreg}. Then, defining
    \begin{equation}\label{def:sum_condition_2}
    D_t:=\sum_{s=0}^{t-1}\DHa(\bar\theta^{s+1}\|\hat\theta^{s+1})- \DHa(\bar\theta^{s+1}\|\theta^{s})+\KL(\bar X^{s+1}\|X^{s+1})- \KL(\bar X^{s+1}\|X^{s}),\end{equation}
    then for all $t\geq 1$
    \begin{equation}\label{ineq:spp_midpoint_conv_appdx}
        \min_{1\leq s\leq t}\{K(\bar X^s,\theta^*)-K(X^*,\bar\theta^s)\}\leq\frac{2\|C\|_\infty[\log m+(1+\alpha)\log 2+D_t]}{t}.
    \end{equation}
\end{lemma}
\begin{proof}        
        Assume that $\|C\|_\infty=1$. First, we note that, by the definition in~\eqref{prob:spp_unreg}
        \[K(\bar X^{t+1},\theta^*)=\inner{\bar X^{t+1}}{C}+2\inner{\theta^*}{\col(\bar X^{t+1})-c},\quad K(X^{*},\bar\theta^{t+1})=\inner{X^*}{C}.\]
        Then, for $t\geq 0$ Lemma~\ref{lem:single_step} can be rewritten as 
        \begin{align*}
        \frac{1}{2}[K(\bar X^{t+1},\theta^*)-&K(X^{*},\bar\theta^{t+1})] \stackrel{\eqref{ineq:single_step}}\leq \KL(X^*\|X^t)-\KL(X^*\|X^{t+1})\\
        &+\DHa(\bar\theta^{t+1}\|\hat\theta^{t+1})- \DHa(\bar\theta^{t+1}\|\theta^{t})+\DHa(\theta^*\|\theta^{t})-\DHa(\theta^*\|\hat\theta^{t+1})\\
        &+\KL(\bar X^{t+1}\|X^{t+1})- \KL(\bar X^{t+1}\|X^{t})\\
        &\stackrel{\eqref{def:theta}\eqref{ineq:clipping_divergence}}\leq \KL(X^*\|X^t)-\KL(X^*\|X^{t+1})\\
        &+\DHa(\bar\theta^{t+1}\|\hat\theta^{t+1})- \DHa(\bar\theta^{t+1}\|\theta^{t})+\DHa(\theta^*\|\theta^{t})-\DHa(\theta^*\|\theta^{t+1})\\
        &+\KL(\bar X^{t+1}\|X^{t+1})- \KL(\bar X^{t+1}\|X^{t})
    \end{align*}
    where the second inequality uses Lemma~\ref{lem:clipping_divergence} and $\theta^{*}\in[-1/2,1/2]^m$.
    
    Summing from $0$ to $t-1$ and using the definition of $D_t$ in~\eqref{def:sum_condition_2} gives
    \begin{align*}
        \KL(X^*\|X^0)-\KL(X^*\|X^{t})+&\DHa(\theta^*\|\theta^0)-\DHa(\theta^*\|\theta^t)+D_t\\
        &\geq \frac{1}{2}\sum_{s=1}^{t}K(\bar X^{s},\theta^*)-K(X^{*},\bar\theta^{s}) \\
        &\geq \frac{t}{2}\min_{1\leq s\leq t}\{K(\bar X^{s},\theta^*)-K(X^{*},\bar\theta^{s})\},
    \end{align*}
    where the final inequality follows from the saddle-point property, which implies that 
    \begin{equation}
        K(Y,\theta^*)-K(X^*,\theta) \ge 0
    \end{equation}
    for all $Y\in\{X\in\Delta^{n\times m}:\row(X)=r\}$ and $\theta\in[-1,1]^m$. Note that
    \begin{align}
        \nonumber\DHa(\theta^*\|\theta^0)&\stackrel{\eqref{def:DHa_explicit}}=\inner{c^\alpha}{\frac{\theta^*+1}{2}\log(\theta^*+1) + \frac{1-\theta^*}{2}\log(1-\theta^*)}\\
        \label{ineq:dual_div_bound}&\leq \inner{c^\alpha}{\frac{\theta^*+1}{2}\log2 + \frac{1-\theta^*}{2}\log2}=(1+\alpha)\log 2
    \end{align}
    since $\theta^0=\zero_m$ and $\theta^*\in[-1/2,1/2]^m\subset[-1,1]^m$.
    The result for $\|C\|_\infty=1$ then directly follows from dropping the nonnegative terms and using $\KL(X^*\|X^0)\leq\log m$ combined with~\eqref{ineq:dual_div_bound}. As in Lemma~\ref{lem:last_iterate}, applying the analysis to the normalized cost and multiplying both sides by $\|C\|_\infty$ gives the result.
\end{proof}
Proposition~\ref{prop:infeas_bound} then follows from combining Lemmas~\ref{lem:last_iterate} and~\ref{lem:summability}. These conditions can be understood as reducing convergence to controlling two separate, computable terms. Lemma~\ref{lem:last_iterate} provides last-iterate sub-optimality certificates, but requires convergence in the column infeasibility $\|\col(X^t)-c\|_1$ to imply convergence in objective value. As observed in Section~\ref{sec:numerical} (see Fig.~\ref{fig:metric_comp}), LAMP rapidly finds feasible solutions, with convergence rates then limited by the sublinear term.

Lemma~\ref{lem:summability} further shows that LAMP finds an $\varepsilon$-approximate saddle-point in $\mathcal{O}(\varepsilon^{-1}(\log m + D_t))$ iterations. As stated in the main text, proving an $o(t)$ bound on $D_t$ would then imply the convergence of LAMP, with non-asymptotic complexity dependent on the specific bound. Note that $D_t$ is computable, and therefore can be tracked along iterate trajectories. Fig.~\ref{fig:Dt_experiment} shows the empirical behavior of $\{D_t\}_{t\geq 0}$ for cell similarity (Fig.~\ref{fig:cell_sum_Dt}) and DOTmark (Fig.~\ref{fig:dotmark_Dt}). For both problem sets, the $D_t$ rapidly becomes negative, then approaches 0 from below. While we observe some nonmonotonicity, the behavior is relatively simple. For the cell similarity problems, the choice of metric has little effect on the behavior of the $D_t$ curve. In contrast, the behavior of $D_t$ is more metric dependent for the DOTmark instances, as shown in in Fig.~\ref{fig:dotmark_Dt}. Each curve appears to converge to zero, but the convergence rate appears to decrease as the $\|C\|_\infty$ value increases. In the case of 2-dimensional DOTmark problems, we have $\|C\|_\infty=\sqrt{n}-1$ for $\ell_\infty$, $\|C\|_\infty=2(\sqrt{n}-1)$ for $\ell_1$, and $\|C\|_\infty=2(\sqrt{n}-1)^2$ for $\ell_2^2$. It may be possible to show that the summand of $D_t$ is nonpositive after some number of iterations, however we leave investigations to future work.
\begin{figure}[t]

    \begin{subfigure}[t]{0.47\linewidth}
       \centering 
       \includegraphics[width=\linewidth]{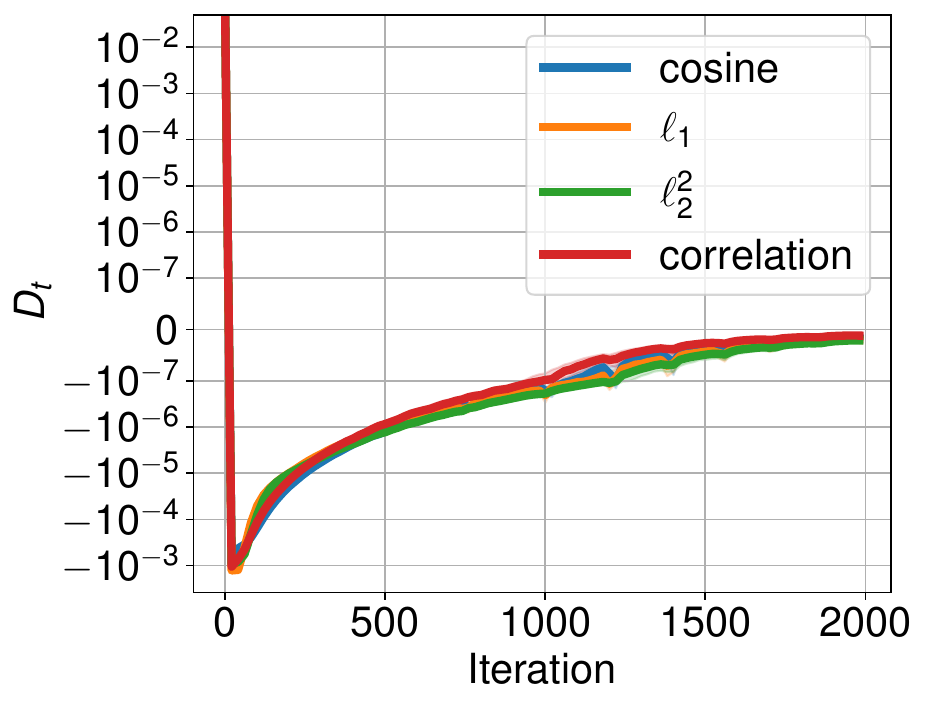}
    \subcaption{Behavior of the summed sequence $D_t$ for the cell similarity OT problems ($n\in\{256,576, 1024, 1600\}$ and $n=m$).}
    \label{fig:cell_sum_Dt}
    \end{subfigure}
    \hfill
    \begin{subfigure}[t]{0.47\linewidth}
       \centering 
       \includegraphics[width=\linewidth]{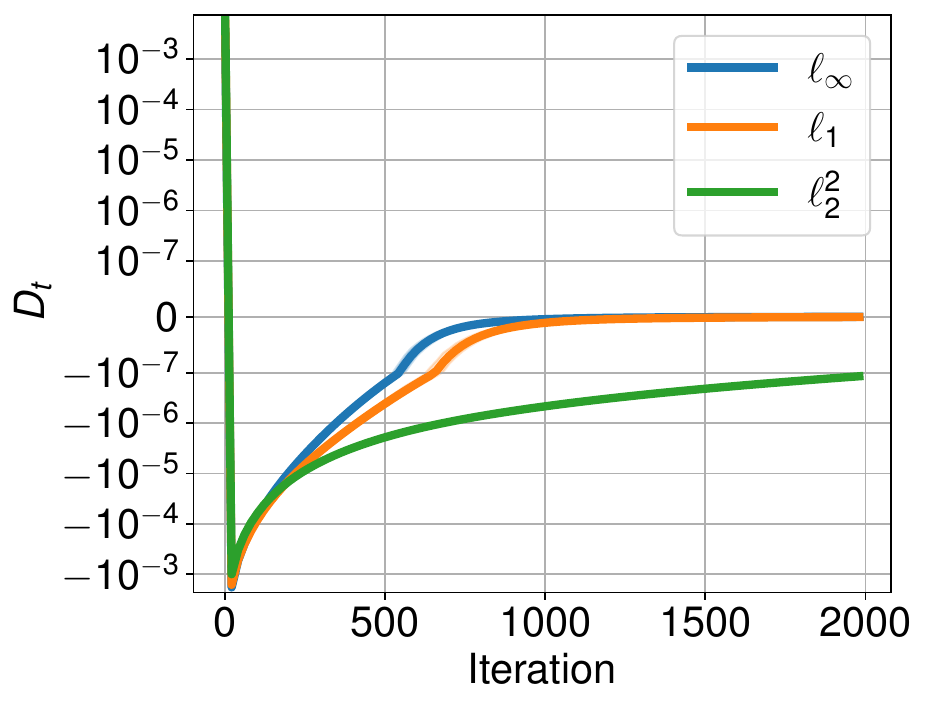}
    \subcaption{Behavior of the summed sequence $D_t$ for DOTmark OT problems ($n\in\{256,576, 1024, 1600\}$ and $n=m$).}
    \label{fig:dotmark_Dt}
    \end{subfigure}
    \caption{Behavior of the summed sequence $D_t$ in OT benchmark problems from the cell similarity database [left] and DOTmark [right].}
    \label{fig:Dt_experiment}
\end{figure}

\section{Numerical and Experimental Details}\label{appdx:numerical}
In this section we provide further details on our GPU-accelerated implementation of LAMP and the numerical experiments in Sections~\ref{sec:lamp},~\ref{sec:implementation}, and~\ref{sec:numerical}.

\subsection{Fused/Warp-tiled CUDA Reductions}
For both LAMP and Sinkhorn, we perform the $\max$ and $\LSE$ reductions in a single loop-fused and warp-tiled kernel $\texttt{warp\_lse\_reduction}$. 

Loop fusion replaces the separate reduction loops with a single loop, which performs an on-the-fly variant of the \texttt{LogSumExp} trick. Consider the generic problem of computing $\LSE(x_{i:})$ over $x\in\R^{n\times m}$ (i.e., a row-wise LSE reduction). The fused loop maintains two sequences, $m^k_i$ and $s^k_i$, for storing the maximum value and accumulated exponential sums for row $i$ respectively. We initialize $m^0_i=-\infty$\footnote{AKA $-\texttt{DBL\_MAX}$}, $s^0_i=0$. For $1\leq j\leq m$, if $x_{ij}\geq m_i^{j-1}$, then we perform the update
\[m_i^j=x_{ij},\quad s^j_i=1+(s^{j-1}_i)^{-m_{i}^j+m_i^{j-1}}.\] 
Otherwise, we simply set $m_i^{j}=m_i^{j-1}$ and update
\[s^j_i=s^{j-1}_i+\exp[x_{ij}-m_i^{j-1}].\]

Warp-tiling splits the computation into a two-loop structure. The outer loop iterates $n/n_{\textrm{Warps}}$ times and performs the row-wise $\LSE$ and $\max$ reductions for $n_{\textrm{Warps}}$ rows simultaneously. The reduction operations are warp-tiled, meaning that each warp computes $32$ terms of each reduction sum in parallel. After iterating through all $m$ elements, the warp threads perform synchronized reductions/broadcasting using \texttt{\_\_shfl\_down\_sync()} and \texttt{\_\_shfl\_sync()}  primitives. Since Julia uses column-major storage, we store a transposed copy of the $C$ matrix (for non-kernelized costs) to ensure that memory accesses are contiguous within each warp.

\subsection{Experimental Details}
We now provide additional details on problem selection, termination criteria, and problem parameters.

We compare LAMP to a log-domain stabilized implementation of Sinkhorn's algorithm~\cite{sinkhornConcerningNonnegativeMatrices1967,altschulerNearlinearTimeApproximation2017,schmitzerStabilizedSparseScaling2019a}, Dual Extrapolation~\cite{jambulapatiDirectTildelbraceOrbrace2019a}, APDAMD~\cite{linEfficiencyEntropicRegularized2022}, Accelerated Sinkhorn~\cite{linEfficiencyEntropicRegularized2022}, and HPD~\cite{chambolleAcceleratedBregmanPrimalDual2022}. We implement each solver in Julia with GPU-accelerated operations using the CUDA.jl package. The implementations, data, and experiment/plotting code are available in a public repository\footnote{\href{https://github.com/mxburns2022/CuLAMP.jl}{https://github.com/mxburns2022/CuLAMP.jl}.}.

Each solver was run with a maximum of $T$ iterations and a wall clock limit of $S$ seconds. The values of $T$ and $S$ varied by experiment, and are given for each figure below. Unless otherwise specified, if a solver reached $10^{-10}$ primal-dual gap (checked every 25-200 iterations, depending on the experiment), then the solver terminates early. For all EOT solvers, the dual function is given by~\eqref{def:dual_eot}. For LAMP the dual functions for $\eta =0$ and $\eta >0$ are given by~\eqref{def:dual_spp} and ~\eqref{def:dual_spp_e}, respectively. Dual extrapolation~\cite{jambulapatiDirectTildelbraceOrbrace2019a} used the $\eta =0$ dual function in~\eqref{def:dual_spp}. 

For all LAMP testing, we set $\beta\approx\log 3$ (specifically 1.1), $\alpha=0.01$, $\tau_1=\tau_2=1/(2\|C\|_\infty)$. Algorithm comparisons were performed on an HPC cluster running a Xeon Gold 6548Y+ CPU with an NVIDIA L40s GPU. 

To ensure that each marginal has full support, we first add a small $10^{-6}$ additive perturbation to the marginals and then renormalize.
 

\vspace{1em}\noindent\textbf{Details for Figure~\ref{fig:eot_broad}}
We choose five $32\times 32$ problems each from the DOTmark~\cite{schrieberDOTmarkBenchmarkDiscrete2017} classes ``ClassicImages'', ``GRFsmooth'', and ``GRFrough'' for a total of 15 problems. Each solver is given a 20 minute/$10^6$ iteration limit. Elapsed wall time, objective value, infeasibility, and primal/dual values are reported every 25 iterations. We set the hyperparameters for each algorithm using $\eta$ as described in each original reference.

\vspace{1em}\noindent\textbf{Details for Figure~\ref{fig:metric_comp}}
We choose ten $64\times 64$ image pairs each from the DOTmark~\cite{schrieberDOTmarkBenchmarkDiscrete2017} classes ``ClassicImages'', ``GRFsmooth'', and ``MicroscopyImages'' for a total of 30 problem instances. Furthermore, we run each problem using $\ell_2^2$, $\ell_1$, and $\ell_\infty$ ground costs. Kernel-based implementations of log-domain Sinkhorn and LAMP are each given a 10 minute/$10^6$ iteration limit. Unlike the non-kernelized Sinkhorn, we terminate the solver based on the condition $\|\col(X^t)-c\|_1+\|\row(X^t)-r\|_1\leq \varepsilon/2$ (which is $5\times 10^{-11}$) rather than a primal-dual gap. Elapsed wall time, objective value, infeasibility, and dual values are reported every 200 iterations for LAMP and Sinkhorn, while annealed Sinkhorn reports metrics after every call to the inner Sinkhorn routine. For annealed Sinkhorn, we use a multiplicative annealing schedule with $\eta_t=\max\{0.8^t\eta_i,\eta_f\}$ with $\eta_i=0.1$, $\eta_f=10^{-10}$ after brief tuning, and each subproblem is solved to $\varepsilon=10^{-10}$ accuracy.

\vspace{1em}\noindent\textbf{Details for Figure~\ref{fig:cell_similarity}}: We utilize the cell omics dataset collected by~\cite{liu2019deconvolution} (published under a CC-BY-4.0 license) and preprocessed by~\cite{huizingOptimalTransportImproves2022}. The preprocessed dataset used can be found in the publicly available repository~\href{https://github.com/cantinilab/OT-scOmics}{https://github.com/cantinilab/OT-scOmics} as \texttt{data/liu\_scatac\_preprocessed.csv.gz}. Each row of the dataset represents a genetic feature, with columns representing individual cells. Following~\cite{huizingOptimalTransportImproves2022}, we model the similarity between two cells by an optimal transport problem. The problem dimension $n=m$ is the number of genetic features. The marginals $r$ and $c$ are computed as the normalized feature vectors for each cell. For a $k$-cell database, costs are computed as 
\begin{equation}
    C_{ij}=\sum_{\ell=1}^kd(i_\ell,j_\ell),
\end{equation}
where $d(i_\ell,j_\ell)$ is a metric between feature $i$ of cell $\ell$ and feature $j$ of cell $\ell$. Therefore, the cost of computing each entry of $C$ scales $\mathcal{O}(k)$ (i.e., with the number of cells). We consider four different metrics $d$, 
\begin{enumerate}
    \item $\ell_1$ distance $d(x,y)=\|x-y\|_1$,
    \item $\ell_2^2$ distance $d(x,y)=\|x-y\|_2^2$,
    \item cosine similarity metric
    \begin{equation*}
        d(x,y)=1-\frac{\inner{x}{y}}{\|x\|_2\|y\|_2},
    \end{equation*}
    \item Pearson correlation metric
    \begin{equation*}
        d(x,y)=1-\frac{\inner{x-\bar{x}}{y-\bar{y}}}{\|x-\bar x\|_2\|y-\bar y\|_2},
    \end{equation*}
    where the sample means $\bar x$ and $\bar y$ as well as second moments are precomputed for each feature for efficiency.
\end{enumerate}

From the dataset, we select the $n=m=5000$ features with the highest variance and randomly subsample $k=20$ cells at random to form the dataset. We then randomly choose a pair of cells to create an OT problem instance. We repeat this process 10 times to generate 10 problem instances, which we then run with a 1 hour/100000 iteration timeout. The accuracy target was additionally set at $10^{-4}$, with algorithms terminating early if a $10^{-4}$-solution was detected using a primal-dual gap bound. Trajectories are averaged over the 10 problems. For Sinkhorn, we use $\eta=10^{-4}$. For annealed Sinkhorn, we use a multiplicative annealing schedule with $\eta_t=\max\{0.95^t\eta_i,\eta_f\}$ with $\eta_i=0.01$, $\eta_f=10^{-4}$ after brief tuning, and each subproblem is solved to $\varepsilon=10^{-4}$ accuracy.

\vspace{1em}\noindent\textbf{Details for Figure~\ref{fig:ctransfer}}
The original images were generated using Google Gemini 2 at a resolution of $1024\times 1024$. Note that our use of the images is in compliance with the Google Generative AI Terms of Service. The images were then rescaled using the Images.jl package and the RGB distributions were computed using a 1D kernel density estimator with $2^{\lceil{\log_2 n}\rceil}$ sample points as described in~\cite{pitie2007automated}. Denoting $\gamma_R$, $\gamma_G$, and $\gamma_B$ as the normalized KDE densities, we then compute each image marginal $\gamma$ as
\begin{equation*}
    \gamma_i \propto {\gamma_R(R_i) + \gamma_G(G_i) + \gamma_B(B_i)},
\end{equation*}
where $R_i$, $G_i$, and $B_i$ are respectively the RGB values of pixel $i$.
KDE densities are computed using the KernelDensity.jl package. Transfer maps are computed using $\ell_2^2$ ground costs. Upon terminating at iteration $T$, the color channels for the row-marginal transfer image is then computed as
\begin{align*}
    R^r_{\textrm{out}}&=p^{\eta_{T}}(\nu^{T})R^c,\quad
    G^r_\textrm{out}=p^{\eta_{T}}(\nu^{T})G^c,\quad
    B^r_\textrm{out}=p^{\eta_{T}}(\nu^{T})B^c, \\
    R^c_{\textrm{out}}&=(p^{\eta_{T}}(\nu^{T}))^\top R^r,\quad
    G^c_\textrm{out}=(p^{\eta_{T}}(\nu^{T}))^\top G^r,\quad
    B^c_\textrm{out}=(p^{\eta_{T}}(\nu^{T}))^\top B^r,
\end{align*}
where $R^r$, $G^r$, and $B^r$ are the input RGB values for the row-marginal image, and $R^c$, $G^c$, and $B^c$ are the input RGB values for the column-marginal image.

\vspace{1em}\noindent\textbf{Details for Figure~\ref{fig:dextrap}}
We choose five $32\times 32$ problems each from the DOTmark classes ``ClassicImages'', ``GRFsmooth'', and ``MicroscopyImages'' for a total of 15 problems. Prior to running the experiment, we performed brief parameter tuning to optimize the primal and dual stepsizes (both set to 1) and the number of inner iterations (set to 4) for Dual Extrapolation.

Both solvers were given a 10 minute/$10^6$ iteration limit. Since Dual Extrapolation has a two-loop structure with 4 inner iterations per outer iteration, we set a limit of $2.5\times 10^5$ total ``outer'' iterations. Elapsed wall time, objective value, infeasibility, and primal/dual values are reported every 200 outer iterations. 

\vspace{1em}\noindent\textbf{Details for Figure~\ref{fig:Dt_experiment}: } For Fig.~\ref{fig:cell_sum_Dt}, we construct 10 cell similarity problem instances each with $k=50$ cells, $n=m\in\{256, 576, 1024, 1600\}$ features, and costs computed using one of the four similarity measures described for Fig.~\ref{fig:cell_similarity} ($\ell_1$, $\ell_2^2$, cosine, Pearson correlation). The summand of $D_t$,
\[\DHa(\bar\theta^{s+1}\|\hat\theta^{s+1})- \DHa(\bar\theta^{s+1}\|\theta^{s})+\KL(\bar X^{s+1}\|X^{s+1})- \KL(\bar X^{s+1}\|X^{s}),\]
is logged at every iteration, with Fig.~\ref{fig:cell_sum_Dt} reporting the cumulative sum averaged over. The cost/marginal computations are identical to Fig.~\ref{fig:cell_similarity}. For Fig.~\ref{fig:dotmark_Dt}, we select 10 image pairs each from 3 DOTmark classes (``ClassicImages'', ``GRFsmooth'', and ``GRFrough'') with images sized to $r$ x $r$, where $r\in \{16, 24, 32, 40\}$, which gives $n=m\in \{256, 576, 1024, 1600\}$. Costs are computed using $\ell_1$, $\ell_2^2$, or $\ell_\infty$ ground costs. As in Fig.~\ref{fig:cell_sum_Dt}, the summand of $D_t$ is logged at every iteration, with the cumulative sum averaged over the problem instances for each cost metric. Each solver is given a 10 minute/100000 iteration time limit.

\vspace{1em}\noindent\textbf{Details for Table~\ref{tab:kernel_comparison}: }
All data for Table~\ref{tab:kernel_comparison} were collected on an RTX 4090 GPU with an Intel i9-13900k CPU and 64 GB of memory running NVIDIA driver 595.58.03. Kernel benchmarks were collected using Julia 1.12.6 with CUDA.jl version 5.11.0, and PyKeOps data was collected using Python 3.12.0, PyKeOps v2.3, and PyTorch 2.9.1 packaged with CUDA 12.6.

We generated synthetic data for each kernel of the specified size, then obtained the median and standard error kernel latency using the BenchmarkTools.jl package.

For PyKeOps, we instantiated each matrix/vector using PyTorch~\cite{paszkePyTorchImperativeStyle2019} CUDA utilities. Kernel operations were performed using the \texttt{Genred} function with the ``GPU'' backend. A sample execution time was measured using the \texttt{timeit} Python package with 500 samples. 50 samples for each size were collected, from which the median and standard error were computed. Kernel compilation was performed in the setup phase, and therefore did not contribute to the runtime.

\section{Review of PDMP Guarantees}\label{appdx:sketch}
In this section, we recall the convergence guarantees of~\cite{liFastComputationOptimal2025} for PDMP. We first provide the formal statement of Theorem~\ref{thm:li_informal} and provide a translation of our notation to that of~\cite{liFastComputationOptimal2025} to ease comparison.

Instead of maximizing over $[-1,1]^m$, the authors in~\cite{liFastComputationOptimal2025} dualize the $\ell_1$ norm of $x\in\R^m$ using the identity
\begin{equation}
    \|x\|_1=\max_{\mu\in\{\Delta^{2}\}^m}\inner{x}{\mu^+-\mu^-},
\end{equation}
where $\{\Delta^{2}\}^m$ is the set of $m\times 2$ matrices where each row lies in the 2-dimensional simplex. We then use the notation $\mu=(\mu^+,\mu^-)$ with $\mu^+,\mu^-\in[0,1]^m$ and so $\mu^+_j+\mu^-_j=1$ for all $j\in\{1\dots m\}$. It is simple to verify that $\mu^+_j-\mu^-_j\in[-1,1]$, hence we have the one-to-one translation
\begin{equation}\label{def:dual_translation}
    \theta(\mu)_j:=\mu^+_j-\mu^-_j=\theta_j
\end{equation}
for some $\theta\in[-1,1]^m$. Since $\mu^+_j+\mu^-_j=1$, we equivalently have
\begin{equation}\label{def:dual_translation2}
    \mu^+_j=\frac{1+\theta(\mu)_j}{2},\quad \mu^-_j=\frac{1-\theta(\mu)_j}{2}.
\end{equation}

Using the simplex dual notation, the authors of~\cite{liFastComputationOptimal2025} target the strongly-convex strongly-concave saddle-point problem
\begin{align}\label{prob:spp_reg}
    \nonumber\min_{p\in\{\Delta^m\}^n}\max_{\mu\in\{\Delta^{2}\}^m}\biggl\{\widetilde{K}^{\eta}(\diag{r}p,\theta):=&\frac{1}{2}\inner{C}{\diag{r}p} - \eta  H_r(p) \\
    &-\eta_\mu \widetilde H^\alpha_d(\mu)+ \|C\|_\infty\inner{\mu^+-\mu^-}{\col_r(p) -c}\biggr\},
\end{align}
where the authors also rescale by $1/2$ and we define the dual entropy
\[
H^\alpha_d(\mu)=\sum_{j=1}^mc^\alpha_j(\mu^+_j\ln\mu^+_j+\mu^-_j\ln\mu^-_j),
\]
which is $\min_jc^\alpha_j$-strongly convex (hence $-H^\alpha_d(\mu)$ is strongly concave) and can easily be shown to satisfy $H^\alpha_d(\mu)=H_c^\alpha(\theta(\mu))$ using~\eqref{def:dual_translation2}. Using the $\mu$ notation, Algorithm~\ref{alg:pdmp_mu} states PDMP more closely aligned with~\cite{liFastComputationOptimal2025}.
One can verify that the clipping step in~\eqref{eq:mu_clipping} is equivalent to the simplified, $\tanh$-based clipping step in~\eqref{eq:pdmp_theta_clipping}. 

\begin{algorithm}\caption{Primal-Dual Mirror Prox}\label{alg:pdmp_mu}
    \begin{algorithmic}
        \REQUIRE $C\in\R_{+}^{n\times m}$, $r\in\Delta^n$, $c\in\Delta^m$, $\alpha>0$, $\tau_1>0$, $\tau_2>0$, $\eta \geq 0$, $\eta_\mu\geq 0$, set $p^0=(1/m)^{n\times m}$, $\mu^0=(1/2)^{m\times 2}$, $c^\alpha=c + \alpha m^{-1}\one_m$.
        \FOR{$t\geq 0$}
            \STATE \textbf{Step 1)} Compute the midpoints
            \begin{align*}
                \bar{p}^{t+1}&=\underset{p\in\{\Delta^m\}^n}\argmin\left\{\inner{\nabla_p \widetilde{K}^\eta(p^t,\mu^t)}{p} + \frac{1}{\tau_1}\KL(\diag{r}p\|\diag{r}p^t)\right\},\\
                \bar{\mu}^{t+1}&=\underset{\mu\in\{\Delta^2\}^m}\argmax\left\{\inner{\nabla_\mu \widetilde{K}^\eta(p^t,\mu^t)}{\mu} - \frac{1}{\tau_2}\D_{H^\alpha_d(\mu)}(\mu\|\mu^t)\right\}.
            \end{align*}
            \vspace{-1em}
            \STATE \textbf{Step 2)} Compute the main sequence
            \begin{align*}
                {p}^{t+1}&=\underset{p\in\{\Delta^m\}^n}\argmin\left\{\inner{\nabla_p \widetilde{K}^\eta(p^t,\bar \mu^{t+1})}{p} + \frac{1}{\tau_1}\KL(\diag{r}p\|\diag{r}p^t)\right\},\\
                \hat{\mu}^{t+1}&=\underset{\mu\in\{\Delta^2\}^m}\argmax\left\{\inner{\nabla_\mu \widetilde{K}^\eta(\bar p^{t+1},\mu^t)}{\mu} - \frac{1}{\tau_2}\D_{H^\alpha_d(\mu)}(\mu\|\mu^t)\right\}.
            \end{align*}
            \vspace{-0.5em}
            \STATE \textbf{Step 3)} Clip the dual variables
            \begin{equation}
                \mu^{t+1} = \operatorname{clip}\left(\hat\mu^{t+1}, \left[\frac{1}{1+e^\beta}, \frac{e^\beta}{1+e^\beta}\right]\right).\label{eq:mu_clipping}
            \end{equation}
        \ENDFOR
    \end{algorithmic}
\end{algorithm}
We now state formal version of Theorem~\ref{thm:li_informal}, which is the primary result of~\cite{liFastComputationOptimal2025}. Note that our parameters are slightly different from those given in~\cite[Equation 2.19]{liFastComputationOptimal2025} to account for the difference in formulation. The authors in~\cite{liFastComputationOptimal2025} pre-divide~\eqref{prob:spp_reg} by $2\|C\|_\infty$ and assume for their analysis that $\|C\|_\infty=1$. Accordingly, the $\varepsilon$ used to set parameters below scaled by $1/\|C\|_\infty$ and the primal/dual stepsizes $\tau_1$/$\tau_2$ are scaled by ${1}/{(2\|C\|_\infty)}$ compared with the statement in~\cite{liFastComputationOptimal2025}.

\begin{theorem}[{\cite[Theorem 2.2]{liFastComputationOptimal2025}}]\label{thm:main_convergence_li}
    Given $\varepsilon\in(0,\|C\|_\infty)$, set
    \begin{equation}
    \begin{gathered}
        \beta=C_1\log\frac{\|C\|_\infty m}{\varepsilon},\quad \hat\eta=\frac{C_2^2\varepsilon }{\sqrt{\beta}\|C\|_\infty \log m},\quad\tau_1=\frac{C_2}{2\|C\|_\infty\sqrt{\beta}},\quad \tau_2=\frac{15C_2\sqrt{\beta}}{2\|C\|_\infty},\quad \alpha=C_3,\\
        \eta =\frac{2\hat\eta}{\tau_{1}},\quad \eta_\mu=\frac{2\hat\eta}{\tau_{2}},
    \end{gathered}\label{def:li_parameters}
    \end{equation}
    where $C_1>0$, $C_2>0$, and $0<C_3\leq 1$ are some universal constants such that $C_1$ and $C_2\sqrt{C_1}$ are sufficiently large and $C_2$, $C_3$, and $C_2^2/C_3$ are sufficiently small, the number of iterations for Algorithm~\ref{alg:pdmp_mu} to obtain an $\varepsilon$-solution to \eqref{prob:primal_ot} is at most
    \begin{equation*}
        T=\mathcal{O}\left(\frac{1}{\hat\eta}\log\frac{m\|C\|_\infty}{\varepsilon}\right)=\mathcal{O}\left(\frac{\|C\|_\infty\log m}{\varepsilon}\log\frac{m\|C\|_\infty}{\varepsilon}\right).
    \end{equation*}
\end{theorem}
We refer interested readers to~\cite{liFastComputationOptimal2025} for the full proof of Theorem~\ref{thm:main_convergence_li}, which is quite technical and beyond the scope of this work.

\newpage

\end{document}